\numberwithin{equation}{section}
\newtheorem{theorem}{Theorem}[section]
\newtheorem{lemma}[theorem]{Lemma}
\newtheorem*{proof}{Proof.}
\newtheorem{corollary}{Corollary}[theorem]
\begin{document}
\begin{frontmatter}
    \title{Compact difference method for Euler-Bernoulli beams and plates with nonlinear nonlocal strong damping}

\author[address2]{Tao Guo}
\ead{guotao6613@163.com}

\author[address3]{Yiqun Li}
\ead{YiqunLi24@outlook.com}

\author[address1]{Wenlin Qiu\corref{mycorrespondingauthor}}
\ead{wlqiu@sdu.edu.cn}

\cortext[mycorrespondingauthor]{Corresponding author.}

\address[address2]{School of Mathematics and Statistics, Hunan Normal University, Changsha, Hunan 410081, China}

\address[address3]{School of Mathematics and Statistics, Wuhan University, Wuhan 430072, P. R. China.}

\address[address1]{School of Mathematics, Shandong University, Jinan, Shandong 250100, China}

\begin{abstract} 
We investigate the numerical approximation to the Euler-Bernoulli (E-B) beams and plates with nonlinear nonlocal strong damping, which describes the damped mechanical behavior of beams and plates in real applications. We discretize the damping term by the composite Simpson's rule and the six-point Simpson's formula in the beam and plate problems, respectively, and then construct the fully discrete compact difference scheme for these problems. To account for the nonlinear-nonlocal term, we design several novel discrete norms to facilitate the error estimates of the damping term and the numerical scheme. The stability, convergence, and energy dissipation properties of the proposed scheme are proved, and numerical experiments are carried out to substantiate the theoretical findings.
\end{abstract}

\begin{keyword} 
   Euler-Bernoulli beams and plates, nonlinear nonlocal strong damping, compact difference, stability and convergence, error estimates, energy dissipation
\end{keyword}


\end{frontmatter}

\section{Introduction}

This work considers the following Euler-Bernoulli (E-B) equation with nonlinear-nonlocal damping \cite{Lange}
\begin{equation}\label{eq4.1}
   \begin{split}
       \partial_{t}^2 u  + \hat{q}(t)\partial_{t} u + \Delta^2 u  = f(\mathbf{x},t), \quad  (\mathbf{x},t) \in \Gamma \times (0,T]
   \end{split}
   \end{equation}
equipped  with the following initial and hinged boundary conditions
     \begin{align}\label{eq4.3}
          &u(\mathbf{x},0)=u_0(\mathbf{x}),  \quad  \partial_{t}u(\mathbf{x},0)=u_1(\mathbf{x}),  \quad  \mathbf{x} \in \Gamma,\\
          \label{eq4.4}
          &u(\mathbf{x},t) = \Delta u(\mathbf{x},t)=0,  \quad \mathbf{x}\in \partial \Gamma,\;  t\in (0,T],
     \end{align}
which describes the damped mechanical behavior of beams (1D) and plates (2D) in real applications.
Here $\Gamma \subset \mathbb R^d $  $(1 \le d \le 2)$   with the boundary $\partial\Gamma$ and the strong damping coefficient is defined as follows \cite{Cannarsa,Emm}
\begin{equation}\label{eq4.2}
     \begin{split}
          \hat{q}(t) := P\left( \int_{\Gamma} \left| \Delta u(\mathbf{x},t)\right|^2 d\mathbf{x} \right)
     \end{split}
     \end{equation}
with
$P(z) : \mathbb{R}^+ \rightarrow (0,\infty)$, and $u_0(\mathbf{x})$, $u_1(\mathbf{x})$ and $f(\mathbf{x},t)$ are given functions.
We note that the nonlocal nonlinear damping term $\hat{q}(t)\partial_t u$ simulates the friction mechanism acting on the object based on the average value of $u(\mathbf{x},t)$ \cite{Cavalcanti1}, which is crucial for modulating the amplitudes of beams and plates via damping effects and thus ensures the stability and energy dissipation efficiency of the system.
The mathematical analysis of the E-B equation has attracted extensive research activities in the literature, such as the well-posedness and solution regularity \cite{Bauchau,Cavalcanti,Cavalcanti1,Emm,Hasanov,Papanicolaou,Yang}, uniform energy decay rates \cite{Bartolomeo,Cavalcanti,Lange,Lasiecka}, Riesz basis properties and stability analysis \cite{Ammari1,Guo,Liu1,Liu2,WangJ}, and boundary stabilization and controllability \cite{Horn,Li,LiuZ,Kim,Wang1}. Numerical studies for this problem is relatively rare in the literature \cite{Ahn,Weeger,Xu,Zhao}.
To the author's best knowledge, the rigorous numerical analysis of the problem \eqref{eq4.1}-\eqref{eq4.4} is still not available in the literature. The main reason lies in the fact that the nonlinear–nonlocal nature of the damping coefficient introduces additional challenges in the construction of the discretization scheme, as well as in the stability analysis and error estimation of the corresponding numerical scheme. To compensate for this gap, we develop the fully discrete compact finite difference scheme for the problem \eqref{eq4.1}-\eqref{eq4.4}, and accordingly prove its error estimate. Specifically, the main contributions are as follows:

($\mathbf{i}$) We employ the composite Simpson's rule and the six-point Simpson's formulation to discretize the nonlinear-nonlocal damping coefficient in 1D beam problem and 2D plate problem, respectively, and then develop the fully discrete compact difference scheme \eqref{eq3.1}--\eqref{eq3.4} and \eqref{eq4.10}--\eqref{eq4.13} for the problem \eqref{eq4.1}-\eqref{eq4.4}.

($\mathbf{ii}$) To account for the nonlinear-nonlocal term in \eqref{eq4.1}, we construct some novel discrete norms in 1D and 2D problems,  e.g., \eqref{g1eq2.2} and \eqref{g1eq4.7}, to facilitate the error estimates of the damping term and the numerical scheme, cf. \eqref{eq2.29}.

($\mathbf{iii}$) We combine the assumptions on the function $P$ to derive the stability, convergence, and energy dissipation properties of fully discrete schemes \eqref{eq3.1}--\eqref{eq3.4} and \eqref{eq4.10}--\eqref{eq4.13} for 1D beam and 2D plate equations. Numerical experiments  are carried out to substantiate the theoretical analysis.


Without loss of generality, we consider the problem   \eqref{eq4.1}--\eqref{eq4.4} on the domain $\Gamma= (0, 1)^d$ for $d = 1$ or $2$. Throughout the work, we use $\mathcal{Q}$ to denote  a generic positive constant independent of the mesh sizes which may be
different in different situations, and we make the following assumptions throughout this paper:\par
($\mathbf{A1}$) There exist positive constants $p_0$ and $p_1$ such that for $0\leq z \leq  \mathcal{Q}$, $P(z)\leq p_1$, and for $z \geq 0$, $P(z)\geq p_0$; \par
($\mathbf{A2}$) $P(z)$ is continuously differentiable function with $0\leq P'(z)\leq L$ for $z\geq 0$, where $L$ is the Lipschitz constant.


The rest of this paper is organized as follows. In Section \ref{sec2}, we develop the spatial semi-discrete scheme and provide its theoretical analysis for the 1D beam problem \eqref{eq1.1}-\eqref{eq1.3}. In Section \ref{sec3}, we prove the error estimate of  the fully discrete compact finite difference scheme for the 1D
 problem \eqref{eq1.1}-\eqref{eq1.3} and prove the energy dissipation property. In Section \ref{sec4}, we construct a fully discrete compact finite difference scheme for  the 2D plate problem \eqref{eq4.1}-\eqref{eq4.4}, and prove its error estimate. Numerical experiments are carried out in Section \ref{sec5} to substantiate the theoretical findings.

\section{Beam problem: Spatial semi-discrete scheme}\label{sec2}

For $d=1$, we formulate and analyze a spatial semi-discrete compact difference scheme for the beam problem \eqref{eq1.1}-\eqref{eq1.3}.

\subsection{Construction of spatial semi-discrete scheme}
Let $\Gamma=[0,1]$. We apply variable substitution $v= \partial_x^2u$ to transform problem \eqref{eq4.1}-\eqref{eq4.4} into
\begin{equation}\label{eq1.1}
   \begin{split}
       \partial^2_t u + \hat{q}(t)\partial_t u + \partial^2_{x}v({x},t) = f(x,t), \quad  (x,t) \in [0,1] \times (0,T],
   \end{split}
   \end{equation}
which is equipped  with the following initial and hinged boundary conditions
\begin{align}\label{eq1.2}
          &u(x,0)=u_0(x),  \quad  \partial_t u(x,0)=u_1(x),  \quad  x \in [0,1],\\
          \label{eq1.3}
          &u(0,t) = u(1,t)= v(0,t)= v(1,t) =0,  \quad  t\in (0,T],
     \end{align}
and the strong damping coefficient $\hat{q}(t) = P\left( \int_{0}^{1} \left| \partial^2_xu(x,t)\right|^2 dx \right)$.

We introduce some notations for subsequent analysis. Given a mesh $x_j=jh$, $j=0,1,\cdots, 2J$ with the spatial step $h=1/(2J)$, and $J$ is a positive integer. In the further analysis, we shall denote that $U_0(t)=U_{2J}(t)=0$ for $t\in (0,T]$. Let $\bar{\varpi}=\{x_j|0\leq j\leq 2J\}$, $\varpi=\bar{\varpi}\cap\Gamma$, where  $\partial\varpi=\bar{\varpi}\cap\partial\Gamma$. Accordingly, we define the following discrete spaces of grid functions
\begin{equation*}
 \begin{split}
    \mathcal{V}:=\{v=\{v_j\}| x_j\in\bar{\varpi} \},\;\; \text{and}\;\; \mathcal{V}^0:=\{v|v\in \mathcal{V} \;\;\text{and}\;\;v_j=0 \;\; \text{if} \;\;x_j\in\partial\varpi\}.
    \end{split}
\end{equation*}
Define some difference-quotient notations as follows
\begin{equation}\label{eq2.1}
     \begin{split}
        & u_j(t) := u(x_j, t), \quad U_j(t) \simeq u(x_j, t), \quad
        \delta_{x}U_j(t) := \frac{U_{j+1}(t)-U_j(t)}{h}, \\
        & \delta_{\Bar{x}}U_j(t):= \frac{U_{j}(t)-U_{j-1}(t)}{h}, \quad  \delta_{x}\delta_{\Bar{x}}U_j(t) := \frac{\delta_{x}U_j(t) - \delta_{\Bar{x}}U_j(t)}{h},\\
        & \mathscr{A}U_i(t):= \begin{cases}
    {\frac{U_{i+1}(t)+10U_i(t)+U_{i-1}(t)}{12} = \left(I+\frac{h^2}{12}\delta_{x}\delta_{\Bar{x}}\right)U_i(t), \; 1\leq i\leq 2J-1; }\\
    {U_i(t), \quad i=0,\;2J,}
    \end{cases}
   \end{split}
   \end{equation}
where $ I$ denotes the identical operator. Let the notations $W=(W_1, W_2, \cdots, W_{2J-1})^{\top}$ and $Q=(Q_1, Q_2, \cdots, Q_{2J-1})^{\top}$ be the real vectors. Moreover, denote the following discrete inner products and norms
\begin{equation}\label{eq2.2}
     \begin{split}
         \langle Q, W \rangle
         &= h \sum\limits_{j=1}^{2J-1} Q_j W_j, \quad \|Q\| = \sqrt{\langle Q, Q \rangle}, \quad \|Q\|_{\infty} = \max\limits_{1\leq j \leq 2J-1}\left| Q_j\right|.
     \end{split}
     \end{equation}
 To facilitate analysis, we define the following novel discrete norms
\begin{equation}\label{g1eq2.2}
     \begin{split}
         \|Q\|_{A}&:= \sqrt{2h \sum\limits_{j=1}^{J} Q_{2j-1}^2},\quad \|Q\|_{B}:= \sqrt{\frac{2}{3}\|Q\|^2+\frac{1}{3}\|Q\|_{A}^2}.
     \end{split}
\end{equation}

Let $U_j(t)$ and $V_j(t)$ be the numerical approximation to $u_j(t)$ and $v_j(t)$, respectively. Based on \eqref{eq2.1}, we thus obtain the following spatial semi-discrete compact difference scheme for the problem \eqref{eq1.1}-\eqref{eq1.3}
\begin{equation}\label{eq2.4}
   \begin{split}
        \mathscr{A}U_j''(t) + P\left( \|{V}(t)\|_{B}^2 \right) \mathscr{A}U_j'(t) + \delta_{x}\delta_{\Bar{x}}V_{j}(t) = \mathscr{A}f_j(t),
   \end{split}
   \end{equation}
   \begin{equation}\label{eq2.5}
   \begin{split}
        & \mathscr{A}V'_j(t) = \delta_{x}\delta_{\Bar{x}}U'_{j}(t), \quad 0< t \leq T,
   \end{split}
   \end{equation}
\begin{equation}\label{eq2.6}
   \begin{split}
        & U_0(t) = U_{2J}(t) = V_0(t)=V_{2J}(t)= 0, \quad 0< t \leq T,
   \end{split}
   \end{equation}
\begin{equation}\label{eq2.7}
   \begin{split}
        U_j(0) = u_0(x_j), \quad U_j'(0) = u_1(x_j), \quad j = 1,2,\cdots, 2J-1.
   \end{split}
   \end{equation}

We present some lemmas to facilitate analysis.
\begin{lemma}\cite{Hu}\label{lemma2.1}
  For any $\omega, v\in \mathcal{V}^0$, we have $\langle \mathscr{A}\omega,\delta_{x}\delta_{\Bar{x}}v\rangle=\langle \delta_{x}\delta_{\Bar{x}}\omega,\mathscr{A}v\rangle$.
\end{lemma}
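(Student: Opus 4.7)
The plan is to reduce the identity to the standard discrete summation-by-parts formula by peeling off the identity part of $\mathscr{A}$.

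First I would use the definition $\mathscr{A} = I + \tfrac{h^2}{12}\delta_x\delta_{\bar{x}}$ on interior points. Since the inner product $\langle \cdot,\cdot\rangle$ in \eqref{eq2.2} only sums over $j=1,\dots,2J-1$, expanding $\mathscr{A}\omega$ inside $\langle \mathscr{A}\omega, \delta_x\delta_{\bar{x}}v\rangle$ gives
\begin{equation*}
\langle \mathscr{A}\omega, \delta_x\delta_{\bar{x}}v\rangle = \langle \omega, \delta_x\delta_{\bar{x}}v\rangle + \tfrac{h^2}{12}\langle \delta_x\delta_{\bar{x}}\omega, \delta_x\delta_{\bar{x}}v\rangle,
\end{equation*}
and an analogous expansion of $\mathscr{A}v$ on the right-hand side yields the same cross term $\tfrac{h^2}{12}\langle \delta_x\delta_{\bar{x}}\omega, \delta_x\delta_{\bar{x}}v\rangle$, which is manifestly symmetric in $\omega, v$.

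Therefore the claim reduces to showing $\langle \omega, \delta_x\delta_{\bar{x}}v\rangle = \langle \delta_x\delta_{\bar{x}}\omega, v\rangle$ for $\omega, v \in \mathcal{V}^0$. I would apply discrete summation by parts twice: one summation by parts converts $\langle \omega, \delta_x\delta_{\bar{x}}v\rangle$ into $-h\sum_{j=0}^{2J-1} \delta_x\omega_j \, \delta_x v_j$ plus a boundary contribution of the form $\omega_{j}\delta_{\bar x}v_{j}\bigr|_{0}^{2J}$, which vanishes thanks to $\omega_0=\omega_{2J}=0$; a second summation by parts converts this back to $\langle \delta_x\delta_{\bar{x}}\omega, v\rangle$, with the boundary term now involving $v_0=v_{2J}=0$.

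The only subtle points are purely bookkeeping: making sure the index ranges on the summation-by-parts steps line up correctly with the interior definition of $\mathscr{A}$ (which differs at the endpoints but does not contribute to the inner product), and confirming that both sets of boundary terms vanish because $\omega,v\in\mathcal{V}^0$. There is no genuine obstacle here; the lemma is a clean symmetry statement, and its role later will be to permit the standard manipulation that moves $\mathscr{A}$ across the biharmonic-like operator $\delta_x\delta_{\bar x}$ inside the energy estimates for \eqref{eq2.4}--\eqref{eq2.5}.
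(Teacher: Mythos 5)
Your argument is correct. Note that the paper itself offers no proof of this lemma: it is quoted verbatim from the reference [Hu] (Hu and Zhang), so there is no internal argument to compare against. Your reduction is the standard one and is complete: writing $\mathscr{A}=I+\tfrac{h^2}{12}\delta_x\delta_{\bar x}$ at interior nodes, the two sides differ only in the terms $\langle \omega,\delta_x\delta_{\bar x}v\rangle$ versus $\langle \delta_x\delta_{\bar x}\omega,v\rangle$, since the cross term $\tfrac{h^2}{12}\langle\delta_x\delta_{\bar x}\omega,\delta_x\delta_{\bar x}v\rangle$ appears identically in both; the remaining equality follows from the symmetric summation-by-parts identity $\langle \omega,\delta_x\delta_{\bar x}v\rangle=-h\sum_{j=0}^{2J-1}(\delta_x\omega_j)(\delta_x v_j)$, whose boundary terms vanish precisely because $\omega,v\in\mathcal{V}^0$. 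This is essentially the same computation the paper already records (in slightly different form) as Lemma \ref{lemma2.5}, so your proof in effect derives Lemma \ref{lemma2.1} from Lemma \ref{lemma2.5} plus the symmetry of the $\mathscr{A}$-correction, which is a clean and self-contained way to dispense with the external citation.
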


\begin{lemma}\cite{Wang}\label{lemma2.2}
  For any $\omega\in \mathcal{V}^0$, we have $\frac{\sqrt{3}}{3}\|\omega\|\leq \|\mathscr{A}\omega\|\leq \|\omega\|$.
\end{lemma}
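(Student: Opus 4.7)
I would start from the identity $\mathscr{A}\omega_j = \omega_j + \tfrac{h^2}{12}\delta_x\delta_{\bar{x}}\omega_j = (\omega_{j-1}+10\omega_j+\omega_{j+1})/12$ and treat the two bounds separately.

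For the upper bound $\|\mathscr{A}\omega\|\le\|\omega\|$, I would apply Jensen's inequality (equivalently, weighted Cauchy--Schwarz) to the convex combination with weights $1,10,1$ summing to $12$:
\[
(\mathscr{A}\omega_j)^2 \le \frac{\omega_{j-1}^2 + 10\,\omega_j^2 + \omega_{j+1}^2}{12}.
\]
Multiplying by $h$, summing over $j=1,\dots,2J-1$, and shifting indices in $\sum\omega_{j\pm 1}^2$ using $\omega_0=\omega_{2J}=0$, each of the three sums is controlled by $\|\omega\|^2$, so the weighted total collapses exactly to $\|\omega\|^2$.

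For the lower bound, expand
\[
\|\mathscr{A}\omega\|^2 = \|\omega\|^2 + \tfrac{h^2}{6}\langle\omega,\delta_x\delta_{\bar{x}}\omega\rangle + \tfrac{h^4}{144}\|\delta_x\delta_{\bar{x}}\omega\|^2.
\]
The last term is nonnegative and I would simply discard it. The cross term is handled by discrete summation by parts, which is legitimate because $\omega\in\mathcal{V}^0$, yielding $\langle\omega,\delta_x\delta_{\bar{x}}\omega\rangle = -h\sum_{j=0}^{2J-1}(\delta_x\omega_j)^2$. The task then reduces to proving the sharp inverse inequality
\[
h\sum_{j=0}^{2J-1}(\delta_x\omega_j)^2 \le \tfrac{4}{h^2}\|\omega\|^2,
\]
which follows by expanding $(\omega_{j+1}-\omega_j)^2\le 2\omega_{j+1}^2 + 2\omega_j^2$ and collapsing the two resulting sums via the homogeneous boundary data. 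Substitution gives $\|\mathscr{A}\omega\|^2 \ge \|\omega\|^2 - \tfrac{h^2}{6}\cdot\tfrac{4}{h^2}\|\omega\|^2 = \tfrac{1}{3}\|\omega\|^2$, equivalently $\|\mathscr{A}\omega\|\ge\tfrac{\sqrt{3}}{3}\|\omega\|$.

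\textbf{Anticipated obstacle.} The upper bound is essentially bookkeeping. The real work is the lower bound: the cross term $\tfrac{h^2}{6}\langle\omega,\delta_x\delta_{\bar{x}}\omega\rangle$ is negative and, for high-frequency $\omega$, can be nearly as large as $\tfrac{2}{3}\|\omega\|^2$ in absolute value, so any inefficiency would destroy the constant $1/3$. Matching the required constant therefore hinges on the inverse estimate being tight and on exploiting $\omega_0=\omega_{2J}=0$ twice --- once to kill the boundary residues in the summation by parts and once to close the index shifts in the inverse inequality.
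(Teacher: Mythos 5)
Your proposal is correct, and it is essentially the argument behind the cited result: the paper itself gives no proof of Lemma~\ref{lemma2.2} (it is simply quoted from \cite{Wang}), and the standard derivation there proceeds exactly as you do --- Jensen/weighted Cauchy--Schwarz for the upper bound, and for the lower bound the splitting $\mathscr{A}=I+\frac{h^2}{12}\delta_x\delta_{\bar{x}}$, summation by parts on $\mathcal{V}^0$, and the inverse estimate $h\sum_{j=0}^{2J-1}(\delta_x\omega_j)^2\le \frac{4}{h^2}\|\omega\|^2$, which together yield $\|\mathscr{A}\omega\|^2\ge\left(1-\tfrac{2}{3}\right)\|\omega\|^2$. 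Your closing remark about the constant is also accurate: the inverse estimate must carry the sharp factor $4/h^2$ (attained in the limit by the highest-frequency mode) for the bound $\tfrac{\sqrt{3}}{3}$ to survive, and both uses of the homogeneous boundary data are indeed where the index bookkeeping closes.
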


\begin{lemma}\cite{Lele,Sun}\label{lemma2.3}
  Assume that function $F(x)\in C^6[0,1]$ and $\zeta=5(1-s)^3-3(1-s)^5$, then it holds that
  \begin{equation*}
   \begin{split}
        \frac{F''(x_{i+1})+10F''(x_{i})+F''(x_{i-1})}{12}=\frac{F(x_{i+1})-2F(x_{i})+F(x_{i-1})}{h^2}&\\
        +\frac{h^4}{360}\int_0^1\left[F^{(6)}(x_{i}-s h)+F^{(6)}(x_{i}+s h)\right]\zeta(s)ds&, \;\;1\leq i\leq 2J-1.
   \end{split}
   \end{equation*}
\end{lemma}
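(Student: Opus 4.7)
The plan is to prove Lemma 2.3 via Taylor expansion with integral remainder applied separately to the two sides of the identity, and then to compare the resulting remainders. Since $F\in C^6[0,1]$, Taylor's theorem with integral-form remainder gives, for any $1\le i\le 2J-1$,
\begin{equation*}
F(x_i\pm h)=\sum_{k=0}^{5}\frac{(\pm h)^k}{k!}F^{(k)}(x_i)+\frac{h^6}{5!}\int_0^1(1-s)^5F^{(6)}(x_i\pm sh)\,ds,
\end{equation*}
\begin{equation*}
F''(x_i\pm h)=\sum_{k=0}^{3}\frac{(\pm h)^k}{k!}F^{(k+2)}(x_i)+\frac{h^4}{3!}\int_0^1(1-s)^3F^{(6)}(x_i\pm sh)\,ds.
\end{equation*}
These two expansions are the only analytic inputs required, the rest being bookkeeping of even and odd powers of $h$.

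Next I would substitute into each side of the claimed identity. On the central-difference side, adding $F(x_{i+1})+F(x_{i-1})-2F(x_i)$ annihilates every odd-order term and leaves $h^2F''(x_i)+\tfrac{h^4}{12}F^{(4)}(x_i)$ together with the combined integral remainder; dividing by $h^2$ yields
\begin{equation*}
\frac{F(x_{i+1})-2F(x_i)+F(x_{i-1})}{h^2}=F''(x_i)+\frac{h^2}{12}F^{(4)}(x_i)+\frac{h^4}{120}\int_0^1(1-s)^5\bigl[F^{(6)}(x_i+sh)+F^{(6)}(x_i-sh)\bigr]ds.
\end{equation*}
The same calculation performed on the compact combination $\tfrac{1}{12}\bigl(F''(x_{i+1})+10F''(x_i)+F''(x_{i-1})\bigr)$ again kills the odd-order terms and produces the identical polynomial part $F''(x_i)+\tfrac{h^2}{12}F^{(4)}(x_i)$, together with a remainder $\frac{h^4}{72}\int_0^1(1-s)^3\bigl[F^{(6)}(x_i+sh)+F^{(6)}(x_i-sh)\bigr]ds$.

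Subtracting the two identities, the $F''(x_i)$ and $\tfrac{h^2}{12}F^{(4)}(x_i)$ contributions cancel exactly, leaving a single expression that must match $\tfrac{h^4}{360}\int_0^1\zeta(s)\bigl[F^{(6)}(x_i-sh)+F^{(6)}(x_i+sh)\bigr]ds$. The verification is just an arithmetic check on the kernel: putting $\tfrac{1}{72}=\tfrac{5}{360}$ and $\tfrac{1}{120}=\tfrac{3}{360}$ over a common denominator gives $\tfrac{1}{72}(1-s)^3-\tfrac{1}{120}(1-s)^5=\tfrac{1}{360}\bigl[5(1-s)^3-3(1-s)^5\bigr]$, which is exactly $\tfrac{1}{360}\zeta(s)$.

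The only delicate point, and the main obstacle, is tracking the two different remainder orders consistently: the compact side requires a $6$th-order Taylor expansion of $F''$ (equivalently, a $4$th-order remainder in terms of $F^{(6)}$), while the central-difference side requires an $8$th-order expansion of $F$ (a $6$th-order remainder in terms of $F^{(6)}$), so that after division by $h^2$ both remainders have the same scale $h^4$. Provided this scaling is respected, the identity reduces to the elementary kernel identity above, and the proof is complete.
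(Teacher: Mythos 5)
Your proof is correct: the two Taylor expansions with integral remainder, the cancellation of the $F''(x_i)$ and $\tfrac{h^2}{12}F^{(4)}(x_i)$ terms, and the kernel identity $\tfrac{1}{72}(1-s)^3-\tfrac{1}{120}(1-s)^5=\tfrac{1}{360}\zeta(s)$ all check out. The paper does not prove this lemma but simply cites \cite{Lele,Sun}, and your argument is precisely the standard derivation given there, so nothing further is needed (the only blemish is the phrase ``$8$th-order expansion of $F$'' in your closing paragraph, which should read $6$th-order, as your displayed formula correctly uses).
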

\begin{lemma}\cite{Lopez}\label{lemma2.5}
 For any $V,W\in \mathcal{V}^0$, we have
  \begin{equation}\label{eq2.3}
     \begin{split}
         \langle {V}, \delta_{x}\delta_{\Bar{x}}({W}) \rangle
         = - h \sum\limits_{j=1}^{2J-1} (\delta_{x}V_j) (\delta_{x}W_j).
     \end{split}
\end{equation}
\end{lemma}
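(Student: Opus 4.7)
The plan is to prove this identity by discrete integration by parts (Abel summation), exploiting the homogeneous boundary conditions $V_0=V_{2J}=0$ and $W_0=W_{2J}=0$ built into the space $\mathcal{V}^0$. This is the standard discrete analogue of $\int V\,W''\,dx = -\int V'\,W'\,dx$ under zero Dirichlet data, so the proof should be purely algebraic.

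First, using the definitions in \eqref{eq2.1}--\eqref{eq2.2}, I would rewrite the inner product as
\begin{equation*}
  \langle V,\delta_x\delta_{\bar{x}}W\rangle
  = h\sum_{j=1}^{2J-1} V_j\,\delta_x\delta_{\bar{x}}W_j.
\end{equation*}
Observing that $\delta_x\delta_{\bar{x}}W_j = \delta_{\bar{x}}(\delta_x W_j) = h^{-1}(\delta_x W_j - \delta_x W_{j-1})$, the right-hand side becomes
\begin{equation*}
  \sum_{j=1}^{2J-1} V_j\bigl(\delta_x W_j - \delta_x W_{j-1}\bigr).
\end{equation*}

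Next I would apply Abel's summation formula. Splitting the sum and shifting the index in the second piece gives
\begin{equation*}
  \sum_{j=1}^{2J-1} V_j\,\delta_x W_j - \sum_{j=0}^{2J-2} V_{j+1}\,\delta_x W_j
  = V_{2J-1}\,\delta_x W_{2J-1} - V_1\,\delta_x W_0 - \sum_{j=1}^{2J-2}(V_{j+1}-V_j)\,\delta_x W_j.
\end{equation*}
Finally, I would invoke $V_0=V_{2J}=0$ to recast the isolated boundary terms as $V_{2J-1}\,\delta_x W_{2J-1} = -(V_{2J}-V_{2J-1})\,\delta_x W_{2J-1} = -h(\delta_x V_{2J-1})(\delta_x W_{2J-1})$ and $-V_1\,\delta_x W_0 = -(V_1-V_0)\,\delta_x W_0 = -h(\delta_x V_0)(\delta_x W_0)$. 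Combining these with the bulk sum (and using $V_{j+1}-V_j = h\,\delta_x V_j$) merges everything into a single sum $-h\sum_j(\delta_x V_j)(\delta_x W_j)$, matching the claimed identity.

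No genuine obstacle is anticipated; the only thing to watch out for is careful bookkeeping of the summation limits when reindexing, so that the boundary contributions from the Abel formula are correctly absorbed into the bulk sum via the vanishing of $V$ at $j=0$ and $j=2J$.
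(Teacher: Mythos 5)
Your argument is the standard discrete summation by parts, and the paper itself offers no proof to compare against --- Lemma \ref{lemma2.5} is simply quoted from the reference [Lopez], so the algebraic route you take is exactly what one would expect. The Abel-summation step and the reindexing are carried out correctly.

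However, the final claim that everything ``merges into a single sum \ldots matching the claimed identity'' does not hold as stated, and the discrepancy is precisely the summation-limit bookkeeping you flagged as the only danger. Your two boundary contributions correspond to the indices $j=0$ and $j=2J-1$, while your bulk sum runs over $j=1,\dots,2J-2$; merging them therefore yields
\begin{equation*}
   \langle V,\delta_x\delta_{\Bar{x}}W\rangle \;=\; -\,h\sum_{j=0}^{2J-1}(\delta_x V_j)(\delta_x W_j),
\end{equation*}
a sum of $2J$ terms starting at $j=0$, not the sum $\sum_{j=1}^{2J-1}$ of $2J-1$ terms printed in the lemma. The missing term is $-h(\delta_x V_0)(\delta_x W_0)=-V_1W_1/h$ (using $V_0=W_0=0$), which is not zero in general, so the two expressions are genuinely different. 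What this reveals is that the lemma's lower summation limit appears to be a typo (the correct forward-difference form starts at $j=0$; equivalently one may write $-h\sum_{j=1}^{2J}(\delta_{\Bar{x}}V_j)(\delta_{\Bar{x}}W_j)$ with backward differences). Your proof is of the correct identity, but you should either state that corrected identity explicitly or note the discrepancy with the printed statement, rather than asserting a match; as written, the last sentence of your argument is not justified. (For the paper's purposes the distinction is harmless, since the lemma is only used to infer the symmetry $\langle V,\delta_x\delta_{\Bar{x}}W\rangle=\langle \delta_x\delta_{\Bar{x}}V,W\rangle$, which holds for either range.)
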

\begin{lemma}\label{lemma2.4}
 For any $\omega\in \mathcal{V}^0$, we have
  \begin{equation}\label{g12.14}
    \begin{split}
     \|\omega\|_{A}\leq \sqrt{2}\|\omega\|, \quad \frac{\sqrt{6}}{3}\|\omega\|\leq\|\omega\|_{B}\leq\frac{2\sqrt{3}}{3}\|\omega\|.
   \end{split}
   \end{equation}
\end{lemma}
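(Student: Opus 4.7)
The lemma is essentially an exercise in comparing the three quadratic forms that define $\|\cdot\|$, $\|\cdot\|_A$, and $\|\cdot\|_B$, so no deep machinery is needed. The plan is simply to unpack the definitions in \eqref{eq2.2} and \eqref{g1eq2.2} and compare sums term by term.

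First I would handle $\|\omega\|_A \leq \sqrt{2}\,\|\omega\|$. By definition $\|\omega\|_A^2 = 2h\sum_{j=1}^{J}\omega_{2j-1}^2$, and since the odd-indexed nodes $\{x_{2j-1}\}_{j=1}^J$ form a subset of the interior mesh $\{x_j\}_{j=1}^{2J-1}$, we have $\sum_{j=1}^J \omega_{2j-1}^2 \leq \sum_{j=1}^{2J-1}\omega_j^2$, giving $\|\omega\|_A^2 \leq 2\|\omega\|^2$. Note we do not need the boundary condition $\omega \in \mathcal{V}^0$ here; it is only used to guarantee the sum ranges are consistent with the definition.

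Next, for the lower bound on $\|\omega\|_B$, I would use that $\|\omega\|_A^2 \geq 0$, so
\begin{equation*}
\|\omega\|_B^2 = \tfrac{2}{3}\|\omega\|^2 + \tfrac{1}{3}\|\omega\|_A^2 \geq \tfrac{2}{3}\|\omega\|^2,
\end{equation*}
which yields $\|\omega\|_B \geq \sqrt{2/3}\,\|\omega\| = \tfrac{\sqrt{6}}{3}\|\omega\|$. For the upper bound on $\|\omega\|_B$, I would plug the bound $\|\omega\|_A^2 \leq 2\|\omega\|^2$ from the first step into the definition of $\|\cdot\|_B$:
\begin{equation*}
\|\omega\|_B^2 \leq \tfrac{2}{3}\|\omega\|^2 + \tfrac{1}{3}\cdot 2\|\omega\|^2 = \tfrac{4}{3}\|\omega\|^2,
\end{equation*}
so $\|\omega\|_B \leq \tfrac{2}{\sqrt{3}}\|\omega\| = \tfrac{2\sqrt{3}}{3}\|\omega\|$.

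There is no real obstacle in this proof: the only mild subtlety is to notice that the coefficient $2h$ (rather than $h$) in the definition of $\|\omega\|_A$ compensates for summing only over half of the nodes, leading to the factor $\sqrt{2}$ in the bound. The entire argument is elementary and should fit in a few lines.
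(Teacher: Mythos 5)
Your proof is correct and is exactly the elementary term-by-term comparison that the paper has in mind (the paper omits the details, stating only that the result follows from the definition \eqref{g1eq2.2}). Nothing further is needed.
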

\begin{proof}
 This proof could be carried out by the definition of \eqref{g1eq2.2}, and thus are omitted for simplicity.
\end{proof}

\subsection{Stability }We will prove the stability result for the semi-discrete difference scheme \eqref{eq2.4}-\eqref{eq2.7}.

Define ${U}(t):=[U_1(t), U_2(t), \cdots, U_{2J-1}(t)]^{\top}$ and $U'(t):=[U_1'(t), U_2'(t), \cdots, U_{2J-1}'(t)]^{\top}$, and then define ${V}(t)$  in an analogous manner. We accordingly define the norm
\begin{equation}\label{eq2.8}
   \begin{split}
        \|{f}\|_1 : = \int_0^{T} \|{f}(t)\|dt < \infty, \quad {f}(t)=[f_1(t), f_2(t), \cdots, f_{2J-1}(t)]^{\top},
   \end{split}
   \end{equation}
then the following stability result holds.

\begin{theorem} \label{theorem2.1}
 Suppose that ${f}$ satisfies \eqref{eq2.8} and $|\partial_x^4u_0(x)|\leq \mathcal{Q} $ on $\Gamma$, then it holds that
\begin{equation*}
 \begin{split}
     \frac{1}{2}\|{U}'(t)\|^2
       & + p_0\int_0^{t} \|U'(t)\|^2dt
      +  \frac{1}{2}\|V(t)\|^2  \leq \mathcal{Q}\left[  3\|{U}'(0)\|^2 + 3\|V(0)\|^2 + 18\|f\|_1^2\right], \quad 0\leq t\leq T,
 \end{split}
\end{equation*}
where ${V}(0)=[v_0(x_1), \cdots, v_0(x_{2J-1})]^{\top}$ and $v_0 =\partial^2_xu_0$.
\end{theorem}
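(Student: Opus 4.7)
The plan is to test the evolution equation \eqref{eq2.4} with the vector $U'(t)$ in the discrete inner product, and to identify the three terms on the left-hand side as (i) the time derivative of a ``kinetic'' energy, (ii) a coercive damping contribution, and (iii), after coupling with the auxiliary relation \eqref{eq2.5}, the time derivative of a ``potential'' energy. Concretely, since $\mathscr{A} = I + \frac{h^2}{12}\delta_{x}\delta_{\bar x}$ acts as a symmetric tridiagonal Toeplitz operator on $\mathcal{V}^0$, one has
$$\langle \mathscr{A}U'', U'\rangle = \tfrac{1}{2}\tfrac{d}{dt}\langle \mathscr{A}U', U'\rangle.$$
For the stiffness coupling I would apply Lemma \ref{lemma2.5} to transfer the difference operator, giving $\langle \delta_{x}\delta_{\bar x}V, U'\rangle = \langle V, \delta_{x}\delta_{\bar x}U'\rangle$, then use \eqref{eq2.5} to replace $\delta_{x}\delta_{\bar x}U'$ by $\mathscr{A}V'$, producing $\langle V, \mathscr{A}V'\rangle = \tfrac{1}{2}\tfrac{d}{dt}\langle \mathscr{A}V, V\rangle$. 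For the damping, assumption (A1) together with the norm equivalence $\tfrac{2}{3}\|\omega\|^2 \leq \langle \mathscr{A}\omega, \omega\rangle \leq \|\omega\|^2$ on $\mathcal{V}^0$ (which follows from a direct eigenvalue computation or from Lemma \ref{lemma2.2} combined with Cauchy--Schwarz) gives $P(\|V\|_{B}^2)\langle \mathscr{A}U', U'\rangle \geq \tfrac{2p_0}{3}\|U'\|^2$. Using $\|\mathscr{A}f\| \leq \|f\|$ (Lemma \ref{lemma2.2}) on the right-hand side, I would then arrive at the differential inequality
$$\tfrac{1}{2}\tfrac{d}{dt}\mathcal{E}(t) + \tfrac{2p_0}{3}\|U'(t)\|^2 \leq \|f(t)\|\,\|U'(t)\|, \qquad \mathcal{E}(t) := \langle \mathscr{A}U', U'\rangle + \langle \mathscr{A}V, V\rangle.$$

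Next I would integrate this inequality in time from $0$ to any $\tau \leq t$ and control the forcing contribution by the sup-trick
$$\int_0^\tau \|f(s)\|\,\|U'(s)\|\,ds \leq \Bigl(\sup_{0\leq s\leq \tau}\|U'(s)\|\Bigr)\,\|f\|_1.$$
Taking the supremum over $\tau \in [0,t]$ and writing $M := \sup_{0\leq s\leq t}\|U'(s)\|$, the integrated inequality becomes a scalar quadratic of the form $M^2 \leq c_1 \mathcal{E}(0) + c_2 M \|f\|_1$. Solving it (complete the square, then apply $(a+b)^2 \leq 2a^2 + 2b^2$) gives $M^2 \lesssim \mathcal{E}(0) + \|f\|_1^2$. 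Substituting this back, together with $\mathcal{E}(\tau) \geq \tfrac{2}{3}(\|U'(\tau)\|^2 + \|V(\tau)\|^2)$ on the left and $\mathcal{E}(0) \leq \|U'(0)\|^2 + \|V(0)\|^2$ on the right, and reinserting into the dissipation term $p_0\int_0^t\|U'\|^2\,ds$, yields the claimed bound with the specific coefficients $3,\,3,\,18$ absorbed into the generic constant $\mathcal{Q}$.

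The main obstacle is essentially bookkeeping rather than conceptual: one must carefully propagate the constants arising from the $\mathscr{A}$-weighted vs.\ unweighted discrete $L^2$ norm equivalence and from Young's inequality to arrive at a bound with the stated form. It is worth emphasizing that at this semidiscrete-in-space, continuous-in-time stage the nonlinearity $P$ enters only through its uniform positivity (A1); the Lipschitz property (A2) and the novel norm $\|\cdot\|_{B}$ introduced in \eqref{g1eq2.2} are not yet required---they will become essential later, in the fully discrete error analysis, when one must estimate differences of the form $P(\|V^n\|_{B}^2) - P(\|\tilde V^n\|_{B}^2)$. The regularity assumption $|\partial_x^4 u_0| \leq \mathcal{Q}$ is used only to ensure that the derived initial datum $V(0)$ with entries $\partial_x^2 u_0(x_j)$ lies in $\mathcal{V}^0$ with bounded initial energy, so that $\mathcal{E}(0)$ is finite and controlled.
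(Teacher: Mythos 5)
Your proposal is correct and follows essentially the same route as the paper: an energy identity obtained by testing the two semi-discrete equations so that the stiffness cross-terms cancel, coercivity of the damping via (A1), integration in time, and the sup-over-$[0,t]$ trick to absorb the forcing term into the quadratic energy. The only immaterial difference is that you test with $U'$ and $V$ rather than with $\mathscr{A}U'$ and $\mathscr{A}V$, so your energy is $\langle \mathscr{A}U',U'\rangle+\langle \mathscr{A}V,V\rangle$ instead of the paper's $\|\mathscr{A}U'\|^2+\|\mathscr{A}V\|^2$; both are equivalent to the unweighted discrete $L^2$ norms (by Lemma \ref{lemma2.2} or the eigenvalue bound you cite), and the remaining bookkeeping is the same.
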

\begin{proof} Take the inner product of \eqref{eq2.4} and \eqref{eq2.5} with $\mathscr{A}U'(t)$ and $\mathscr{A}V(t)$  respectively, then we have
\begin{equation*}
   \begin{split}
        \langle \mathscr{A}{U}''(t), \mathscr{A}U'(t) \rangle & +  P\left( \|V(t)\|_B^2 \right) \langle \mathscr{A}{U}'(t), \mathscr{A}U'(t) \rangle + \langle \delta_{x}\delta_{\Bar{x}}{V}(t), \mathscr{A}U'(t) \rangle = \langle \mathscr{A}{f}(t), \mathscr{A}U'(t) \rangle,
   \end{split}
   \end{equation*}
   \begin{equation*}
   \begin{split}
        \langle \mathscr{A}{V}'(t), \mathscr{A}V(t) \rangle =   \langle \delta_{x}\delta_{\Bar{x}}{U}'(t), \mathscr{A}V(t) \rangle.
   \end{split}
   \end{equation*}
We utilize the assumption $(\mathbf{A1})$ and Lemmas \ref{lemma2.1} and \ref{lemma2.5}, and add the above two formulas to obtain
\begin{equation}\label{eq2.9}
   \begin{split}
      \frac{1}{2}\frac{d}{dt}\|\mathscr{A}{U}'(t)\|^2
      & + p_0 \|\mathscr{A}{U}'(t)\|^2
      +  \frac{1}{2}\frac{d}{dt}\|\mathscr{A}{V}(t)\|^2  \leq  \| \mathscr{A}{f}(t)\| \|\mathscr{A}{U}'(t) \|.
   \end{split}
   \end{equation}
 We integrate \eqref{eq2.9} with respect to $t$ from 0 to $s$ to obtain
\begin{equation}\label{eq2.10}
   \begin{split}
       \frac{1}{2}&\|\mathscr{A}{U}'(s)\|^2
       + p_0\int_0^{s} \|\mathscr{A}{U}'(t)\|^2dt
      +  \frac{1}{2}\|\mathscr{A}{V}(s)\|^2 \\
      & \leq \frac{1}{2} \|\mathscr{A}{U}'(0)\|^2 + \frac{1}{2}\|\mathscr{A}{V}(0)\|^2   + \int_0^{s}\| \mathscr{A}{f}(t)\| \|\mathscr{A}{U}'(t) \|dt,
   \end{split}
   \end{equation}
and we use Lemma \ref{lemma2.2} and the following inequality
\begin{equation}\label{eq2.11}
   \begin{split}
       \int_0^{s}\| \mathscr{A}{f}(t)\| \|\mathscr{A}{U}'(t) \|dt \leq \frac{1}{12}\sup\limits_{0\leq t\leq s}\|{U}'(t)\|^2+3\left(\int_0^{s}\|f(t)\|dt\right)^2
   \end{split}
   \end{equation}
 to obtain that
\begin{equation}\label{eq2.12}
   \begin{split}
       \frac{1}{2}\|{U}'(s)\|^2
       & + p_0\int_0^{s} \|{U}'(t)\|^2dt
      + \frac{1}{2}\|{V}(s)\|^2 \\
      & \leq \frac{3}{12}\sup\limits_{0\leq t\leq s}\|{U}'(t)\|^2+9\left(\int_0^{s}\|f(t)\|dt\right)^2  + \frac{3}{2}\left(\|{U}'(0)\|^2+\|{V}(0)\|^2 \right)  \\
      & \leq \frac{1}{4}\sup\limits_{0\leq t\leq s}\|{U}'(t)\|^2+9\|f\|_1^2  + \frac{3}{2}\left(\|{U}'(0)\|^2+\|{V}(0)\|^2 \right).
   \end{split}
   \end{equation}
We choose an appropriate $\hat{t}$ such that $\|{U}'(\hat{t}) \|^2=\sup\limits_{0\leq t \leq s}\|{U}'(t) \|^2$ and conclude from equation \eqref{eq2.12} that
\begin{equation}\label{eq2.13}
   \begin{split}
       \frac{1}{4}\sup\limits_{0\leq t \leq s}\|{U}'(t) \|^2
       &
      \leq 9\|f\|_1^2  + \frac{3}{2}\left(\|{U}'(0)\|^2+\|{V}(0)\|^2 \right).
   \end{split}
   \end{equation}
By combining \eqref{eq2.12} and \eqref{eq2.13}, we arrive at
\begin{equation}\label{eq2.14}
   \begin{split}
        \frac{1}{2}\|{U}'(s)\|^2
       & + p_0\int_0^{s} \|{U}'(t)\|^2dt
      + \frac{1}{2}\|{V}(s)\|^2  \leq 18\|f\|_1^2 + 3\left(\|{U}'(0)\|^2+\|{V}(0)\|^2 \right).
   \end{split}
   \end{equation}
Therefore, for any $0<s<T$, we apply the Gr\"{o}nwall lemma to finish the proof.
\end{proof}
\subsection{Convergence analysis}
According to Lemma \ref{lemma2.3}, we first introduce the following key formulas before establishing the convergence analysis
\begin{equation}\label{eq2.15}
   \begin{array}{ll}
   \mathscr{A}\partial^2_xv(x_j,t)=\delta_{x}\delta_{\Bar{x}}v(x_j,t) + \mathcal{R}_1(x_j,t),\\
    \mathscr{A}\partial^2_xu(x_j,t)=\delta_{x}\delta_{\Bar{x}}u(x_j,t) + \mathcal{R}_2(x_j,t),
   \end{array}
   \end{equation}
where $\left|\mathcal{R}_1(x_j,t)\right| \leq \mathcal{Q}h^4$ and $\left|\mathcal{R}_2(x_j,t)\right| \leq \mathcal{Q}h^4$ for $1\leq j\leq 2J-1$ and  $t \in [0, T]$. Then, we consider \eqref{eq1.1}-\eqref{eq1.3} at the point $x_j$ and apply the compact operator $\mathscr{A}$ to get
\begin{equation}\label{eq2.16}
   \begin{split}
        \mathscr{A}u_j''(t) & + P\left( \int_0^1|v(x,t)|^2dx \right)\mathscr{A}u_j'(t) + \delta_{x}\delta_{\Bar{x}}v_j(t) = \mathscr{A}f_j(t) + \mathcal{R}_1(x_j, t),
   \end{split}
   \end{equation}
\begin{equation}\label{eq2.17}
   \begin{split}
        & \mathscr{A}v'_j(t) = \delta_{x}\delta_{\Bar{x}}u'_{j}(t)+\mathcal{R}_2(x_j,t), \quad 0< t \leq T,
   \end{split}
   \end{equation}
   \begin{equation}\label{eq2.18}
   \begin{split}
        & u_0(t) = u_{2J}(t) = v_0(t)=v_{2J}(t)= 0, \quad 0< t \leq T,
   \end{split}
   \end{equation}
\begin{equation}\label{eq2.19}
   \begin{split}
        u_j(0) = u_0(x_j), \quad u_j'(0) = u_1(x_j), \quad j = 1,2,\cdots, 2J-1,
   \end{split}
   \end{equation}
in which $\mathcal{R}_1(x_j, t)$ and $\mathcal{R}_2(x_j, t)$ are defined in \eqref{eq2.15}. Denote ${\xi}(t)={u}(t)-{U}(t)$ and ${\eta}(t)={v}(t)-{V}(t)$, where ${u}(t)=[u_1(t), u_2(t), \cdots, u_{2J-1}(t)]^{\top}$ and ${v}(t)=[v_1(t), v_2(t), \cdots, v_{2J-1}(t)]^{\top}$. By subtracting \eqref{eq2.4}-\eqref{eq2.7} from \eqref{eq2.16}-\eqref{eq2.19}, we obtain the error equations as follows
\begin{equation}\label{eq2.20}
   \begin{split}
        & \mathscr{A}\xi_j''(t) + P\left( \|V(t)\|_B^2 \right) \mathscr{A}\xi_j'(t) + \delta_{x}\delta_{\Bar{x}}\eta_{j}(t)  \\
        &  = R_1(x_j, t)  + \left[  P\left( \|V(t)\|_B^2 \right) -  P\left(
        \int_0^1|v(x,t)|^2 dx \right)  \right]\mathscr{A}u_j'(t),
    \end{split}
   \end{equation}
   \begin{equation}\label{eq2.21}
   \begin{split}
        & \mathscr{A}\eta'_j(t) = \delta_{x}\delta_{\Bar{x}}\xi'_{j}(t)+\mathcal{R}_2(x_j,t), \quad 0< t \leq T,
   \end{split}
   \end{equation}
\begin{equation}\label{eq2.22}
   \begin{split}
        & \xi_0(t) = \xi_{2J}(t) = 0, \quad
        \eta_{0}(t) = \eta_{2J}(t) = 0, \quad 0< t \leq T, \\
        & \xi_j(0) = 0, \quad \xi_j'(0) = 0, \quad j=1,2,\cdots, 2J-1.
   \end{split}
   \end{equation}
Next, we further present the following theorem.
\begin{theorem} \label{theorem2.2}
 Suppose that $|\partial_x^6 u(x,t)|,\;|\partial_t u(x,t)|\leq \mathcal{Q}$ on $\Gamma\times[0,T]$,
then it follows that
\begin{equation*}
 \begin{split}
    \sqrt{\|{u}'(t)-{U}'(t)\|^2
       + p_0\int_0^{t} \|{u}'(t)-{U}'(t)\|^2dt
      +  \|v(t)-{V}(t)\|^2}  \leq \mathcal{Q}h^4,\quad 0\leq t\leq T,
 \end{split}
\end{equation*}
\end{theorem}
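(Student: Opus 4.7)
The plan is to mirror the stability argument of Theorem \ref{theorem2.1} applied to the error system \eqref{eq2.20}--\eqref{eq2.22}, with the essential new work concentrated on controlling the nonlinear-nonlocal discrepancy
\[
\left[P\!\left(\|V(t)\|_B^2\right)-P\!\left(\int_0^1 |v(x,t)|^2\,dx\right)\right]\mathscr{A}u_j'(t).
\]
Concretely, I would take the discrete inner product of \eqref{eq2.20} with $\mathscr{A}\xi'(t)$ and of \eqref{eq2.21} with $\mathscr{A}\eta(t)$, add them, and use Lemma \ref{lemma2.1} to cancel the $\langle\delta_x\delta_{\bar x}\eta,\mathscr{A}\xi'\rangle+\langle\delta_x\delta_{\bar x}\xi',\mathscr{A}\eta\rangle$ pairing. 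Assumption $(\mathbf{A1})$ produces the coercive term $p_0\|\mathscr{A}\xi'\|^2$, so after integrating in $t$ I arrive at an inequality of the same type as \eqref{eq2.10}, but with the truncation terms $\mathcal{R}_1,\mathcal{R}_2$ (each $O(h^4)$ uniformly by Lemma \ref{lemma2.3}) and the nonlinear-nonlocal discrepancy on the right-hand side.

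The main obstacle is the nonlinear-nonlocal term. I would exploit the Lipschitz bound $(\mathbf{A2})$ to get
\[
\bigl|P(\|V(t)\|_B^2)-P(\textstyle\int_0^1|v(x,t)|^2dx)\bigr|\le L\,\bigl|\|V(t)\|_B^2-\textstyle\int_0^1|v(x,t)|^2dx\bigr|,
\]
and then split
\[
\bigl|\|V\|_B^2-\textstyle\int_0^1|v|^2dx\bigr|\le\bigl|\|V\|_B^2-\|v\|_B^2\bigr|+\bigl|\|v\|_B^2-\textstyle\int_0^1|v|^2dx\bigr|.
\]
The second piece is exactly the quadrature error of the composite Simpson's rule applied to $|v|^2$ — indeed the novel norm in \eqref{g1eq2.2} was designed so that $\|\cdot\|_B^2$ is precisely Simpson's discretization of $\int_0^1|\cdot|^2dx$ for grid functions vanishing at the boundary — so under the $C^6$ regularity hypothesis on $u$ (hence on $v=\partial_x^2 u$) this term is $O(h^4)$. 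The first piece is bounded by $(\|V\|_B+\|v\|_B)\|\eta\|_B$, and by Lemma \ref{lemma2.4} this is controlled by a constant multiple of $\|\eta\|$, where the multiplying constant is uniformly bounded via Theorem \ref{theorem2.1} (applied to $V$) together with the regularity of $v$.

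Combining these estimates with Cauchy--Schwarz and Young's inequality on $\langle\mathcal{R}_1,\mathscr{A}\xi'\rangle$ and $\langle\mathcal{R}_2,\mathscr{A}\eta\rangle$, the accumulated residual on the right is $\mathcal{Q}h^8+\mathcal{Q}\int_0^t(\|\mathscr{A}\xi'\|^2+\|\mathscr{A}\eta\|^2)\,ds$. Using Lemma \ref{lemma2.2} to convert $\|\mathscr{A}\cdot\|$ norms back to $\|\cdot\|$ norms (incurring only constant factors) and noting that the initial errors vanish by \eqref{eq2.22}, a supremum-in-$\hat t$ argument identical to the one yielding \eqref{eq2.13} absorbs the prefactor on $\sup_{0\le t\le s}\|\xi'(t)\|^2$. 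Finally, Grönwall's lemma closes the estimate and delivers the claimed $O(h^4)$ bound on $\sqrt{\|\xi'(t)\|^2+p_0\int_0^t\|\xi'(s)\|^2ds+\|\eta(t)\|^2}$ uniformly for $0\le t\le T$.
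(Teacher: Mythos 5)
Your proposal is correct and follows essentially the same route as the paper: the same inner-product/cancellation setup via Lemma \ref{lemma2.1}, the same key observation that $\|\cdot\|_B^2$ is exactly the composite Simpson quadrature of $\int_0^1|v|^2dx$ (giving the $O(h^4)$ consistency error in \eqref{eq2.27}), the same Lipschitz-plus-triangle-inequality splitting leading to the bound $\mathcal{Q}(h^4+\|\eta\|)$ in \eqref{eq2.30}, and the same Young/Gr\"onwall closure. The only cosmetic difference is that the paper absorbs the $\|\mathscr{A}\xi'\|^2$ contribution directly into the damping term via Young's inequality rather than a supremum argument, but both are valid.
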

 where $u'(t):=[u_1'(t), u_2'(t), \cdots, u_{2J-1}'(t)]^{\top}$.
\begin{proof}
   We take the inner product of \eqref{eq2.20} and \eqref{eq2.21} with $\mathscr{A}{\xi}'(t)$ and $\mathscr{A}{\eta}(t)$, respectively, to obtain that
\begin{equation}\label{eq2.23}
   \begin{split}
       & \langle \mathscr{A}{\xi}''(t), \mathscr{A}\xi'(t) \rangle  +  P\left( \|V(t)\|_B^2 \right) \langle \mathscr{A}{\xi}'(t), \mathscr{A}\xi'(t) \rangle + \langle \delta_{x}\delta_{\Bar{x}}{\eta}(t), \mathscr{A}\xi'(t) \rangle \\
       & = \langle \mathcal{R}_1(t), \mathscr{A}\xi'(t) \rangle + \left\langle\left[  P\left( \|V(t)\|_B^2 \right) -  P\left(
        \int_0^1|v(x,t)|^2 dx \right)  \right]\mathscr{A}u_j'(t), \mathscr{A}\xi'(t)\right\rangle,
    \end{split}
   \end{equation}
   \begin{equation}\label{eq2.24}
   \begin{split}
         \langle \mathscr{A}{\eta}'(t), \mathscr{A}\eta(t) \rangle =   \langle \delta_{x}\delta_{\Bar{x}}{\xi}'(t), \mathscr{A}\eta(t) \rangle + \langle \mathcal{R}_2(t), \mathscr{A}\eta(t) \rangle.
   \end{split}
   \end{equation}
We add the above two equalities and use Lemma \ref{lemma2.1} to obtain
\begin{equation}\label{eq2.25}
   \begin{split}
        & \langle \mathscr{A}{\xi}''(t), \mathscr{A}\xi'(t) \rangle  +  P\left( \|V(t)\|_B^2 \right) \langle \mathscr{A}{\xi}'(t), \mathscr{A}\xi'(t) \rangle + \langle \mathscr{A}{\eta}'(t), \mathscr{A}\eta(t) \rangle\\
       & = \left\langle\left[  P\left( \|V(t)\|_B^2 \right) -  P\left(
        \int_0^1|v(x,t)|^2 dx \right)  \right]\mathscr{A}u_j'(t), \mathscr{A}\xi'(t) \right\rangle +\langle \mathcal{R}_1(t), \mathscr{A}\xi'(t) \rangle+\langle \mathcal{R}_2(t), \mathscr{A}\eta(t) \rangle.
   \end{split}
   \end{equation}
Afterwards, we analyze the first term of the right-hand side of \eqref{eq2.25}. First, we restate
\begin{equation*}
    \begin{split}
          \int_0^1 |v(x,t)|^2dx = \sum\limits_{j=1}^{J} \int_{x_{2j-1}}^{x_{2j}}|v(x,t)|^2dx, \quad t\geq 0.
    \end{split}
\end{equation*}
We follow the well-known composite Simpson's formula and the assumptions of the theorem to derive
\begin{equation*}
    \begin{split}
          \left| \sum\limits_{j=1}^J\int_{x_{2j-1}}^{x_{2j}}|v(x,t)|^2dx -h\sum\limits_{j=1}^J  \frac{(v(x_{2j-2},t))^2+4(v(x_{2j-1},t))^2+(v(x_{2j},t))^2}{3}  \right|
          \leq \mathcal{Q}h^4.
    \end{split}
\end{equation*}
According to \eqref{eq2.18}, it holds that
\begin{equation}\label{eq2.26}
    \begin{split}
          & h\sum\limits_{j=1}^J  \frac{(v(x_{2j-2},t))^2+4(v(x_{2j-1},t))^2+(v(x_{2j},t))^2}{3}\\
          & =\frac{h}{3}\left[2\sum\limits_{j=1}^J(v(x_{2j-2},t))^2+4\sum\limits_{j=1}^J(v(x_{2j-1},t))^2\right]\\
          & =\frac{2h}{3}\left[\sum\limits_{j=1}^{2J}(v(x_{j},t))^2+\sum\limits_{j=1}^J(v(x_{2j-1},t))^2\right]\\
          & =\frac{2}{3}\left(h\sum\limits_{j=1}^{2J}(v(x_{j},t))^2\right)+\frac{1}{3}\left(2h\sum\limits_{j=1}^{J}(v(x_{2j-1},t))^2\right)\\
          & = \frac{2}{3}\|v(t)\|^2+\frac{1}{3}\|v(t)\|^2_A=\|v(t)\|_B^2.
    \end{split}
\end{equation}
 By the composite Simpson's formula and the above inequality, we have
\begin{equation}\label{eq2.27}
   \begin{split}
        \left| \int_0^1 |v(x,t)|^2dx - \|v(t)\|_B^2 \right| \leq \mathcal{Q}h^4, \quad t\geq 0.
   \end{split}
   \end{equation}
According to the assumption ($\mathbf{A2}$) and \eqref{eq2.27}, one yields
\begin{equation}\label{eq2.28}
   \begin{split}
        \left| P\left(\int_0^1 |v(x,t)|^2dx \right) - P\left(\|v(t)\|_B^2 \right) \right|  \leq \mathcal{Q}L h^4, \quad t\geq 0.
   \end{split}
   \end{equation}
Then, utilizing Lemma \ref{lemma2.4}, the triangle inequality and the assumption ($\mathbf{A2}$) to arrive at
\begin{equation}\label{eq2.29}
   \begin{split}
        \left| P\left(\|{v}(t)\|_B^2 \right) - P\left(\|V(t)\|_B^2 \right) \right|  & \leq \mathcal{Q}L\left(\|{v}(t)\|_B+ \|V(t)\|_B\right) \|{v}(t)-V(t)\| \\
        & \leq   \mathcal{Q}\|{v}(t)-V(t)\| \leq  \mathcal{Q}\|\eta(t)\|.
   \end{split}
   \end{equation}
Combining \eqref{eq2.28}-\eqref{eq2.29}, it holds that
\begin{equation}\label{eq2.30}
   \begin{split}
        \left| P\left( \|V(t)\|_B^2 \right) -  P\left(
        \int_0^1|v(x,t)|^2 dx \right)  \right|  & \leq    \mathcal{Q}\left( h^4+ \|\eta(t)\| \right).
   \end{split}
   \end{equation}
Next, we substitute \eqref{eq2.30} into \eqref{eq2.25}, use the Cauchy-Schwarz inequality and $| {u}'(t)|\leq  \mathcal{Q}$, then 
\begin{equation}\label{eq2.31}
   \begin{split}
      &\frac{1}{2}\frac{d}{dt}\|\mathscr{A}{\xi}'(t)\|^2
       + p_0 \|\mathscr{A}{\xi}'(t)\|^2
      +  \frac{1}{2}\frac{d}{dt}\|\mathscr{A}{\eta}(t)\|^2 \\
        &  \leq  \mathcal{Q}\sum\limits_{m=1}^{2} \| {R}_m(t)\|^2 +  \mathcal{Q}\left( h^8+ \|{\eta}(t)\|^2 \right) + \frac{p_0}{2}\|\mathscr{A} {\xi}'(t) \|^2+\|\mathscr{A}\eta(t)\|^2.
   \end{split}
   \end{equation}
We integrate \eqref{eq2.31} regarding $t$ from 0 to $s$, apply Lemma \ref{lemma2.2} and note ${\xi}'(0)={\eta}(0)=0$ to get
\begin{equation}\label{eq2.32}
   \begin{split}
      &\frac{1}{2}\|{\xi}'(s)\|^2
       + \frac{p_0}{2} \int_0^s\|{\xi}'(t)\|^2dt
      +  \frac{1}{2}\|{\eta}(s)\|^2  \leq  \mathcal{Q}\int_0^s\left[\sum\limits_{m=1}^{2} \| {R}_m(t)\|^2+h^8\right]dt + \mathcal{Q}\int_0^s\|{\eta}(t)\|^2dt, \quad s \leq T.
   \end{split}
   \end{equation}
Then, we use \eqref{eq2.15} to yield
\begin{equation}\label{eq2.33}
   \begin{split}
      \sum\limits_{m=1}^{2} \| {R}_m(t)\|^2 \leq  \mathcal{Q}(T) h^8, \quad t\geq 0.
   \end{split}
   \end{equation}
By inserting \eqref{eq2.33} into \eqref{eq2.32}, an application of the Gr\"{o}nwall's lemma with $T<\infty$ completes this proof.
\end{proof}

\section{Beam problem: Fully discrete scheme}\label{sec3}
In this section, we shall construct a fully discrete difference scheme and deduce some theoretical results for the problem \eqref{eq1.1}-\eqref{eq1.3}.
\subsection{Construction of fully discrete scheme}
Denote $u_j^n:=u(x_j,t_n)$ and $v_j^n:=v(x_j,t_n)$ for $0\leq j \leq 2J$ and $0\leq n \leq N+1$. Define the uniform temporal partition $t_n=n\tau$ for $0\leq n \leq N+1$.
In addition, we define some difference quotient notations as follows
\begin{equation*}
 \begin{split}
    \delta_tU^n_j&:=\frac{U^n_j-U^{n-1}_j}{\tau}, \quad  \delta_t^{(2)} U^n_j:=\delta_t(\delta_t U^{n+1}_j), \\
    \bar{\delta_t}U^n_j&:=\frac{U^{n+1}_j-U^{n-1}_j}{2\tau},\quad \widetilde{U}^n_j:=\frac{U^{n+1}_j+U^{n-1}_j}{2}, \quad n\geq 1.
    \end{split}
\end{equation*}
 Let $U_j^n$ and $V_j^n$ be numerical approximations of $u_j^n$ and $v_j^n$, respectively. Based on the above notations and the semi-discrete compact difference scheme \eqref{eq2.4}-\eqref{eq2.7}, we derive the following fully discrete compact difference scheme
\begin{equation}\label{eq3.1}
   \begin{split}
        \mathscr{A}\delta_t^{(2)} U_j^n + P\left( \|V^n\|_B^2 \right) \mathscr{A} \bar{\delta_t} U_j^n +  \delta_{x}\delta_{\Bar{x}}\widetilde{V}_j^n= \mathscr{A}f_j^n, \quad n\geq 1,
   \end{split}
   \end{equation}
   \begin{equation}\label{eq3.2}
   \begin{split}
        \mathscr{A}\bar{\delta_t}V_j^n = \delta_{x}\delta_{\Bar{x}}\bar{\delta_t}U_j^n, \quad n\geq 1,
   \end{split}
   \end{equation}
\begin{equation}\label{eq3.3}
   \begin{split}
        & U_0^n = U_{2J}^n = 0, \quad V_{0}^n = V_{2J}^n=0, \quad  1\leq n \leq N+1,
   \end{split}
   \end{equation}
\begin{equation}\label{eq3.4}
   \begin{split}
        U_j^0 = u_0(x_j), \quad \delta_tU_j^1 = u_1(x_j)+\frac{\tau}{2}u_2, \quad V^0=\Delta U^0, \quad V^1=\Delta U^1,
   \end{split}
   \end{equation}
where $j = 1,2,\cdots, 2J-1$. By \eqref{eq1.1} and \eqref{eq3.4}, we obtain the following relation
\begin{equation}\label{eq3.5}
   \begin{split}
       U_j^1 &= U_j^0 + \tau u_1(x_j)+ \frac{\tau^2}{2}u_2 =  u_0(x_j) + \tau u_1(x_j)+ \frac{\tau^2}{2}u_2, \\
       u_2 &= \partial^2_tu(0)=-\hat{q}(0)u_1-\Delta^2u_0+f^0, \quad j = 1,2,\cdots, 2J-1.
   \end{split}
\end{equation}
Throughout this section, we assume that the following regularity assumptions hold
\begin{equation*}
 \begin{split}
   (\mathbf{A3}) \;|\partial_t^4u(x,t)|\leq \mathcal{Q}, \quad |\partial_t^3\partial_x^2u(x,t)|\leq \mathcal{Q},\quad|\partial_x^6u(x,t)|\leq \mathcal{Q},\quad |\partial_x^4 u_1(x)|\leq \mathcal{Q}, \quad (x,t) \in \Gamma\times [0,T].
 \end{split}
\end{equation*}
\subsection{Stability and energy dissipation}
We prove the stability result of the fully discrete compact difference scheme \eqref{eq3.1}-\eqref{eq3.4}. We still denote ${U}^n:=[U_1^n, U_2^n, \cdots, U_{2J-1}^n]^{\top}$ and ${V}^n:=[V_1^n, V_2^n, \cdots, V_{2J-1}^n]^{\top}$ to be the solution of \eqref{eq3.1}-\eqref{eq3.4} when no confusion occurs.

\begin{theorem}\label{theorem3.1}  Suppose that \eqref{eq2.8} and the assumption ($\mathbf{A3}$) hold, then we have
  \begin{equation*}
   \begin{split}
          \|U^N\|_C\leq \|U^0\|_C+2\tau\sum_{j=1}^N\|f^j\|,
   \end{split}
   \end{equation*}
where $\|U^n\|_C:=\sqrt{\|\mathscr{A}\delta_t {U}^{n+1}\|^2+\frac{1}{2}\left(\|\mathscr{A}V^{n+1}\|^2+\|\mathscr{A}V^{n}\|^2\right)}$
\end{theorem}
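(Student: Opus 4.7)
The plan is to mimic the semi-discrete energy argument of Theorem~\ref{theorem2.1} in a centered-in-time fashion. Specifically, I will test equation~\eqref{eq3.1} against $\mathscr{A}\bar{\delta_t}U^n$ in the discrete inner product $\langle\cdot,\cdot\rangle$, so that the second-order time term collapses into a telescoping difference of $\|\mathscr{A}\delta_t U\|^2$, the damping term yields the non-negative quantity $P(\|V^n\|_B^2)\|\mathscr{A}\bar{\delta_t}U^n\|^2$, and, via Lemma~\ref{lemma2.1} followed by~\eqref{eq3.2}, the stiffness term converts into a telescoping difference of $\|\mathscr{A}V\|^2$. The definition of $\|U^n\|_C$ is tailored precisely so that these two differences recombine into $\|U^n\|_C^2-\|U^{n-1}\|_C^2$.

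Concretely, writing $\delta_t^{(2)}U^n=(\delta_t U^{n+1}-\delta_t U^n)/\tau$ and $\bar{\delta_t}U^n=(\delta_t U^{n+1}+\delta_t U^n)/2$ yields
\begin{equation*}
\langle \mathscr{A}\delta_t^{(2)}U^n,\mathscr{A}\bar{\delta_t}U^n\rangle=\tfrac{1}{2\tau}\bigl(\|\mathscr{A}\delta_t U^{n+1}\|^2-\|\mathscr{A}\delta_t U^n\|^2\bigr),
\end{equation*}
while Lemma~\ref{lemma2.1} combined with~\eqref{eq3.2} and the pointwise identity $\widetilde V^n\cdot\bar{\delta_t}V^n=\tfrac{1}{4\tau}\bigl((V^{n+1})^2-(V^{n-1})^2\bigr)$ gives
\begin{equation*}
\langle \delta_{x}\delta_{\Bar{x}}\widetilde V^n,\mathscr{A}\bar{\delta_t}U^n\rangle=\langle \mathscr{A}\widetilde V^n,\mathscr{A}\bar{\delta_t}V^n\rangle=\tfrac{1}{4\tau}\bigl(\|\mathscr{A}V^{n+1}\|^2-\|\mathscr{A}V^{n-1}\|^2\bigr).
\end{equation*}
Multiplying the resulting identity by $2\tau$ and invoking $P\ge p_0>0$ from assumption~$(\mathbf{A1})$ to discard the damping contribution produces
\begin{equation*}
\|U^n\|_C^2-\|U^{n-1}\|_C^2\le 2\tau\langle \mathscr{A}f^n,\mathscr{A}\bar{\delta_t}U^n\rangle,\qquad n\ge 1.
\end{equation*}

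To convert this into a linear-in-$f$ one-step bound I would apply Cauchy--Schwarz together with Lemma~\ref{lemma2.2} (which gives $\|\mathscr{A}f^n\|\le\|f^n\|$), and exploit the embedding
\begin{equation*}
\|\mathscr{A}\bar{\delta_t}U^n\|\le \tfrac12\bigl(\|\mathscr{A}\delta_t U^{n+1}\|+\|\mathscr{A}\delta_t U^n\|\bigr)\le \tfrac12\bigl(\|U^n\|_C+\|U^{n-1}\|_C\bigr),
\end{equation*}
where the last inequality holds because $\|\mathscr{A}\delta_t U^{k+1}\|^2$ is one of the summands defining $\|U^k\|_C^2$. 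Factoring the left-hand side as $(\|U^n\|_C-\|U^{n-1}\|_C)(\|U^n\|_C+\|U^{n-1}\|_C)$ and cancelling the common positive factor yields a one-step estimate of the form $\|U^n\|_C-\|U^{n-1}\|_C\le \tau\|f^n\|$ (the theorem's $2\tau$ being a conservative but harmless rounding), and telescoping over $n=1,\dots,N$ finishes the proof. The main obstacle is arranging the kinetic and stiffness contributions as \emph{exact} differences of $\|U^n\|_C^2$: this is where the precise pairing of $\bar{\delta_t}$ with $\widetilde V^n$ on the stiffness side, together with the commutation identity in Lemma~\ref{lemma2.1} and the auxiliary relation~\eqref{eq3.2}, are essential; once that telescoping structure is in place, the rest is the standard divide-by-the-sum trick for going from a quadratic energy identity to a linear-in-source bound.
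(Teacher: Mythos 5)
Your proposal is correct and follows essentially the same route as the paper: testing \eqref{eq3.1} with $\mathscr{A}\bar{\delta_t}U^n$ and \eqref{eq3.2} with $\mathscr{A}\widetilde{V}^n$, cancelling the stiffness cross terms via Lemma \ref{lemma2.1}, using the same telescoping identities and the bound $\|\mathscr{A}\bar{\delta_t}U^n\|\le\tfrac12(\|U^n\|_C+\|U^{n-1}\|_C)$. The only cosmetic difference is in the last step, where you cancel the factor $\|U^n\|_C+\|U^{n-1}\|_C$ per step and telescope (even yielding the slightly sharper constant $\tau$ in place of $2\tau$), whereas the paper sums first and then invokes the maximal index $m$ with $\|U^m\|_C=\max_n\|U^n\|_C$.
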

\begin{proof} We take the inner product of \eqref{eq3.1} with $\mathscr{A}\bar{\delta_t} {U}^n$ and \eqref{eq3.2} with $\mathscr{A} \widetilde{V}^n$, respectively, to get
\begin{equation}\label{eq3.6}
   \begin{split}
         \langle \mathscr{A}\delta_t^{(2)}{U}^n, \mathscr{A}\bar{\delta_t} {U}^n \rangle  +  P\left( \|V^n\|_B^2 \right) \|\mathscr{A}\bar{\delta_t} {U}^n\|^2 + \langle \delta_x\delta_{\Bar{x}}\widetilde{V}^n, \mathscr{A}\bar{\delta_t} {U}^n \rangle
         = \langle \mathscr{A}{f}^n, \mathscr{A}\bar{\delta_t} {U}^n \rangle,
   \end{split}
   \end{equation}
   \begin{equation}\label{eq3.7}
   \begin{split}
         \langle \mathscr{A}\bar{\delta_t}{V}^n, \mathscr{A} \widetilde{V}^n \rangle
         = \langle \delta_x\delta_{\Bar{x}}\bar{\delta_t}U^n, \mathscr{A} \widetilde{V}^n \rangle,\quad n \geq 1.
   \end{split}
   \end{equation}
Then, adding \eqref{eq3.6} and \eqref{eq3.7} together and using Lemma \ref{lemma2.1}, we yield that
\begin{equation}\label{eq3.8}
   \begin{split}
         \langle \mathscr{A}\delta_t^{(2)}{U}^n, \mathscr{A}\bar{\delta_t} {U}^n \rangle  +  P\left( \|V^n\|_B^2 \right) \|\mathscr{A}\bar{\delta_t} {U}^n\|^2 + \langle \mathscr{A}\bar{\delta_t}{V}^n, \mathscr{A} \widetilde{V}^n \rangle
         = \langle \mathscr{A}{f}^n, \mathscr{A}\bar{\delta_t} {U}^n \rangle.
   \end{split}
   \end{equation}
First,  we notice that
\begin{equation}\label{eq3.9}
   \begin{split}
        & \langle \mathscr{A}\delta_t^{(2)} {U}^n, \mathscr{A}\bar{\delta_t}{U}^n \rangle
        = \frac{1}{2\tau} \left[ \|\mathscr{A}\delta_t {U}^{n+1}\|^2 -  \|\mathscr{A}\delta_t {U}^{n}\|^2 \right], \\
        & \langle  \mathscr{A}\bar{\delta_t}{V}^n, \mathscr{A} \widetilde{V}^n \rangle
        = \frac{1}{4\tau}\left[ \| \mathscr{A}{V}^{n+1}\|^2 - \|\mathscr{A}{V}^{n-1}\|^2 \right].
   \end{split}
   \end{equation}
Then we substitute \eqref{eq3.9} into \eqref{eq3.8} to arrive at
\begin{equation}\label{eq3.10}
   \begin{split}
        &\frac{1}{2\tau} \left[ \|\mathscr{A}\delta_t {U}^{n+1}\|^2 -  \|\mathscr{A}\delta_t {U}^{n}\|^2 \right] + p_0\|\mathscr{A}\bar{\delta_t} {U}^n\|^2 \\
        &+\frac{1}{4\tau}\left[ \| \mathscr{A}{V}^{n+1}\|^2 +\| \mathscr{A}{V}^{n}\|^2 -\left(\| \mathscr{A}{V}^{n}\|^2 + \|\mathscr{A}{V}^{n-1}\|^2 \right)\right] \\
        &\leq \|\mathscr{A}{f}^n\| \|\mathscr{A}\bar{\delta_t} {U}^n\|,
   \end{split}
   \end{equation}
where the assumption ($\mathbf{A1}$) is applied. Summing for \eqref{eq3.10} regarding $n$ from $1$ to $N$, multiplying the result inequality by $2\tau $ and using Lemma \ref{lemma2.2}, we obtain that
\begin{equation}\label{eq3.11}
   \begin{split}
        &\qquad \qquad\qquad\|\mathscr{A}\delta_t {U}^{N+1}\|^2 - \|\mathscr{A}\delta_t {U}^{1}\|^2 + 2\tau p_0\sum_{n=1}^N\|\mathscr{A}\bar{\delta_t} {U}^n\|^2 \\
        & +\frac{1}{2}\left[ \|\mathscr{A} {V}^{N+1}\|^2 +\| \mathscr{A}{V}^{N}\|^2 -\left(\|\mathscr{A}{V}^{1}\|^2 + \|\mathscr{A}{V}^{0}\|^2 \right)\right]\leq 2\tau\sum_{n=1}^N\|{f}^n\| \|\mathscr{A}\bar{\delta_t} {U}^n\|.
   \end{split}
   \end{equation}
Therefore, \eqref{eq3.11} is transformed into
\begin{equation}\label{eq3.13}
   \begin{split}
         \|U^N\|_C^2+2\tau p_0\sum_{n=1}^N\|\bar{\delta_t} {U}^n\|^2\leq \|U^0\|_C^2+2\tau\sum_{n=1}^N\|{f}^n\| \|\mathscr{A}\bar{\delta_t} {U}^n\|.
   \end{split}
   \end{equation}
According to the definition of $\|\cdot\|_C$, which is noted that
\begin{equation}\label{eq3.14}
   \begin{split}
        &\|\mathscr{A}\bar{\delta_t} {U}^n\|=\left\|\frac{\mathscr{A}U^{n+1}-\mathscr{A}U^{n}+\mathscr{A}U^{n}-\mathscr{A}U^{n-1}}{2\tau}\right\|\\
        &\leq \frac{\|\delta_t \mathscr{A}U^{n+1}\|+\|\delta_t \mathscr{A}U^{n}\|}{2}\leq \frac{\| U^{n}\|_C+\|U^{n-1}\|_C}{2}.
   \end{split}
   \end{equation}
Then, we substitute \eqref{eq3.14} into \eqref{eq3.13} to yield
\begin{equation}\label{eq3.15}
   \begin{split}
       \|U^N\|_C^2 \leq \|U^0\|_C^2+2\tau\sum_{n=1}^N\|{f}^n\| \frac{\| U^{n}\|_C+\|U^{n-1}\|_C}{2}.
   \end{split}
   \end{equation}
By taking a suitable $m$ such that $\|U^m\|_C=\max\limits_{0\leq n \leq N}\|U^n\|_C$, we have
\begin{equation*}
   \begin{split}
              \|U^N\|_C \leq \|U^m\|_C \leq  \|U^0\|_C+2\tau\sum_{n=1}^N\|{f}^n\|.
   \end{split}
   \end{equation*}
This completes the proof.
\end{proof}

Based on Theorem \ref{theorem3.1}, we prove the energy dissipative property of the fully discrete compact difference scheme in the following theorem.

\begin{corollary}\label{coro1}
    The fully discrete compact difference scheme \eqref{eq3.1}-\eqref{eq3.4} satisfies the following  energy dissipative property
    \begin{equation*}
        0\leq\hat{\mathbf{E}}^n \leq \hat{\mathbf{E}}^0, \quad 1\leq n \leq N,
        \end{equation*}
where
 \begin{equation}\label{energy1d}
     \hat{\mathbf{E}}^n := \sqrt{\|\mathscr{A}\delta_t {U}^{n+1}\|^2+\frac{1}{2}\left(\|\mathscr{A}V^{n+1}\|^2+\|\mathscr{A}V^{n}\|^2\right)}.
    \end{equation}
\end{corollary}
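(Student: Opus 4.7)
My plan is to observe that Corollary~\ref{coro1} is essentially a direct corollary of the computation already carried out inside the proof of Theorem~\ref{theorem3.1}, specialized to the homogeneous case $f^n\equiv 0$ (which is the natural setting for an energy-dissipation statement, since otherwise an external source can inject energy). The goal is therefore to extract, from the identities leading to \eqref{eq3.10}, a monotone one-step inequality $(\hat{\mathbf{E}}^n)^2\le(\hat{\mathbf{E}}^{n-1})^2$, and then iterate.

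Concretely, I would retrace the first part of the proof of Theorem~\ref{theorem3.1}: take the inner products of \eqref{eq3.1} with $\mathscr{A}\bar{\delta_t}U^n$ and of \eqref{eq3.2} with $\mathscr{A}\widetilde{V}^n$, add them, and use Lemma~\ref{lemma2.1} to cancel the cross terms involving $\delta_x\delta_{\bar x}$ (exactly as in \eqref{eq3.6}--\eqref{eq3.8}). Then substitute the two standard telescoping identities from \eqref{eq3.9}, namely
\begin{equation*}
\langle\mathscr{A}\delta_t^{(2)}U^n,\mathscr{A}\bar{\delta_t}U^n\rangle=\tfrac{1}{2\tau}\bigl[\|\mathscr{A}\delta_tU^{n+1}\|^2-\|\mathscr{A}\delta_tU^n\|^2\bigr],
\end{equation*}
\begin{equation*}
\langle\mathscr{A}\bar{\delta_t}V^n,\mathscr{A}\widetilde{V}^n\rangle=\tfrac{1}{4\tau}\bigl[\|\mathscr{A}V^{n+1}\|^2-\|\mathscr{A}V^{n-1}\|^2\bigr],
\end{equation*}
and invoke assumption $(\mathbf{A1})$ on the lower bound of $P$. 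With $f^n=0$, this yields
\begin{equation*}
\|\mathscr{A}\delta_tU^{n+1}\|^2+\tfrac{1}{2}\bigl(\|\mathscr{A}V^{n+1}\|^2+\|\mathscr{A}V^{n}\|^2\bigr)+2\tau p_0\|\mathscr{A}\bar{\delta_t}U^n\|^2\le \|\mathscr{A}\delta_tU^{n}\|^2+\tfrac{1}{2}\bigl(\|\mathscr{A}V^{n}\|^2+\|\mathscr{A}V^{n-1}\|^2\bigr),
\end{equation*}
which is exactly $(\hat{\mathbf{E}}^n)^2+2\tau p_0\|\mathscr{A}\bar{\delta_t}U^n\|^2\le(\hat{\mathbf{E}}^{n-1})^2$ in view of the definition \eqref{energy1d}.

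Dropping the manifestly non-negative dissipation term $2\tau p_0\|\mathscr{A}\bar{\delta_t}U^n\|^2$ gives $(\hat{\mathbf{E}}^n)^2\le(\hat{\mathbf{E}}^{n-1})^2$, and iterating from $n$ down to $0$ (together with taking square roots, which is allowed because $\hat{\mathbf{E}}^n\ge 0$ is built into the definition as a square root of a sum of squared norms) produces $\hat{\mathbf{E}}^n\le\hat{\mathbf{E}}^0$. Non-negativity $\hat{\mathbf{E}}^n\ge 0$ is immediate from \eqref{energy1d}.

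I do not expect a serious obstacle: the heavy lifting---cancellation of the $\delta_x\delta_{\bar x}$ coupling via Lemma~\ref{lemma2.1} and the telescoping identities---has already been done inside Theorem~\ref{theorem3.1}. The only subtlety worth flagging is that the dissipation inequality genuinely requires $(\mathbf{A1})$ (i.e.\ $P\ge p_0>0$), which is what rules out energy growth from the damping term; without it one would only obtain $(\hat{\mathbf{E}}^n)^2\le(\hat{\mathbf{E}}^{n-1})^2$ with $p_0$ replaced by a possibly vanishing quantity, and the monotonicity could still hold but the physical interpretation as strict dissipation would be lost.
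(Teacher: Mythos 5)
Your proposal is correct and follows exactly the route the paper intends: the paper's own proof of Corollary \ref{coro1} simply sets $f\equiv 0$ and refers back to Theorem \ref{theorem3.1}, whose energy identity \eqref{eq3.10} yields precisely your one-step inequality $(\hat{\mathbf{E}}^n)^2+2\tau p_0\|\mathscr{A}\bar{\delta_t}U^n\|^2\le(\hat{\mathbf{E}}^{n-1})^2$. Your version is, if anything, slightly sharper in that it records monotone decay at every step rather than only the comparison with $\hat{\mathbf{E}}^0$.
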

\begin{proof}
We set $f\equiv 0$. The proof could be carried out by following Theorem \ref{theorem3.1}, and thus is omitted for simplicity.
\end{proof}
\subsection{Convergence analysis}
We prove the convergence result of the fully discrete compact difference scheme \eqref{eq3.1}-\eqref{eq3.4}.
We consider \eqref{eq2.16} at the point $t=t_n$ for $1\leq n\leq N$
\begin{equation}\label{eq3.16}
   \begin{split}
        \mathscr{A}\delta_t^{(2)} u_j^n &+ P\left( \int_0^1 |v(x,t_n)|^2dx \right) \mathscr{A}\bar{\delta_t} u_j^n + \delta_x\delta_{\bar{x}}\widetilde{v}_j^n = \mathscr{A}{f}_j^n + (R_1)_j^n + \sum\limits_{m=3}^{5}(R_m)_j^n,
   \end{split}
   \end{equation}
   \begin{equation}\label{eq3.17}
   \begin{split}
        \mathscr{A}\bar{\delta_t}v_j^n = \delta_x\delta_{\bar{x}}\bar{\delta_t}u_j^n + (R_2)_j^n,
   \end{split}
   \end{equation}
\begin{equation}\label{eq3.18}
   \begin{split}
        & u_0^n = u_{2J}^n = 0, \quad v_0^n = v_{2J}^n = 0,
   \end{split}
   \end{equation}
\begin{equation}\label{eq3.19}
   \begin{split}
        u_j^0 = u_0(x_j), \quad (\partial_tu)_j^0 = u_1(x_j), \quad j = 1,2,\cdots, 2J-1,
   \end{split}
   \end{equation}
in which the truncation errors
\begin{equation}\label{eq3.20}
   \begin{split}
        & (R_1)_j^n = \mathscr{A}{(\partial^2_xv_j^n)}-\delta_{x}\delta_{\Bar{x}}v_j^n,\\
        & (R_2)_j^n = \mathcal{R}_2(x_j,t_n) + \mathscr{A}\left(\bar{\delta_t} v_j^n - v_j'(t_n)\right)+\delta_x\delta_{\bar{x}}\left(\bar{\delta_t} u_j^n - u_j'(t_n)\right),  \\
        & (R_3)_j^n = \mathscr{A}\left(\delta_t^{(2)} u_j^n - u_j''(t_n)\right), \quad  n \geq 1, \\
        & (R_4)_j^n = P\left( \int_0^1 |v(x,t_n)|^2dx \right) \mathscr{A}\left[\bar{\delta_t} u_j^n - u_j'(t_n) \right], \\
        &  (R_5)_j^n = \delta_x\delta_{\bar{x}}\left( \widetilde{v}_j^n-v(x_j, t_n)\right).
   \end{split}
   \end{equation}
Then, define $\xi_j^n := u_j(t_n)-U_j^n = u_j^n - U_j^n$ and $ \eta_j^n:=v_j^n-V_j^n$ for $0\leq n \leq N+1$, ${\xi}^n=[\xi_1^n,\xi_2^n,\cdots,\xi_{2J-1}^n]^{\top}$ and ${\eta}^n=[\eta_1^n,\eta_2^n,\cdots,\eta_{2J-1}^n]^{\top}$. We subtract \eqref{eq3.1}-\eqref{eq3.4} from \eqref{eq3.16}-\eqref{eq3.19}, to obtain the following error equations
\begin{equation}\label{eq3.21}
   \begin{split}
        \mathscr{A}\delta_t^{(2)}\xi_j^n &+ P\left( \|{V}^n\|_B^2 \right) \mathscr{A}\bar{\delta_t} \xi_j^n  + \delta_{x}\delta_{\Bar{x}}\widetilde{\eta}_j^n
         =  (R_1)_j^n \\
         &+\left[  P\left( \|V^n\|_B^2 \right) -  P\left(
        \int_0^1|v(x,t_n)|^2 dx \right)  \right] \mathscr{A}\bar{\delta_t}u_j^n + \sum\limits_{m=3}^{5}(R_m)_j^n,
   \end{split}
   \end{equation}
   \begin{equation}\label{eq3.22}
   \begin{split}
        \mathscr{A}\bar{\delta_t} \eta_j^n
         =  \delta_{x}\delta_{\Bar{x}}\bar{\delta_t}\xi_j^n+(R_2)_j^n ,\quad n\geq 1,
   \end{split}
   \end{equation}
\begin{equation}\label{eq3.23}
   \begin{split}
        & \xi_0^n = \xi_{2J}^n = 0, \quad
        \eta_0^n = \eta_{2J}^n = 0, \quad  1\leq n \leq N+1,
   \end{split}
   \end{equation}
\begin{equation}\label{eq3.24}
   \begin{split}
        \xi_j^0 = 0, \quad \delta_t \xi_j^1 = \delta_tu_j^1 - \delta_t U^1_j = \frac{1}{2\tau} \int_{0}^{t_1} \partial^3_su(x_j,s)(s-t_1)^2ds, \quad 1\leq j \leq 2J-1.
   \end{split}
   \end{equation}
Then, the following convergence result holds.
\begin{theorem} \label{theorem3.2}
 Suppose that the assumption ($\mathbf{A3}$) holds,
then we have
\begin{equation}\label{qqq1}
 \begin{split}
   &\sqrt{\|\delta_t({u}^{n+1}-{U}^{n+1})\|^2
       + \|{v}^{n+1} -{V}^{n+1} \|^2 + \|{v}^{n} -{V}^{n} \|^2}  \leq \mathcal{Q}(T) \left( \tau^2 + h^4\right), \quad 1\leq n \leq N.
 \end{split}
\end{equation}
\end{theorem}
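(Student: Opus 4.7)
\emph{Plan of proof.} The plan is to mimic the energy-type argument of Theorem~\ref{theorem2.2} at the fully discrete level, combining it with the telescoping identities used in the stability proof of Theorem~\ref{theorem3.1}. First I take the discrete inner product of the $\xi$-equation \eqref{eq3.21} with $\mathscr{A}\bar{\delta_t}\xi^n$ and of the $\eta$-equation \eqref{eq3.22} with $\mathscr{A}\widetilde{\eta}^n$, add the two identities, and use Lemma~\ref{lemma2.1} to cancel the cross term $\langle\delta_x\delta_{\bar x}\widetilde{\eta}^n,\mathscr{A}\bar{\delta_t}\xi^n\rangle-\langle\delta_x\delta_{\bar x}\bar{\delta_t}\xi^n,\mathscr{A}\widetilde{\eta}^n\rangle$. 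Exactly as in \eqref{eq3.9}, the two principal terms telescope: $\langle\mathscr{A}\delta_t^{(2)}\xi^n,\mathscr{A}\bar{\delta_t}\xi^n\rangle=\frac{1}{2\tau}(\|\mathscr{A}\delta_t\xi^{n+1}\|^2-\|\mathscr{A}\delta_t\xi^n\|^2)$ and $\langle\mathscr{A}\bar{\delta_t}\eta^n,\mathscr{A}\widetilde{\eta}^n\rangle=\frac{1}{4\tau}(\|\mathscr{A}\eta^{n+1}\|^2-\|\mathscr{A}\eta^{n-1}\|^2)$. The dissipative contribution $P(\|V^n\|_B^2)\|\mathscr{A}\bar{\delta_t}\xi^n\|^2\ge p_0\|\mathscr{A}\bar{\delta_t}\xi^n\|^2$ from $(\mathbf{A1})$ I keep on the left to absorb the Cauchy--Schwarz residuals.

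The right-hand side consists of the five truncation terms $(R_m)_j^n$ from \eqref{eq3.20} together with the coefficient-mismatch factor $\bigl[P(\|V^n\|_B^2)-P\bigl(\int_0^1|v(x,t_n)|^2\,dx\bigr)\bigr]\mathscr{A}\bar{\delta_t}u_j^n$. For the truncations, $\|R_1^n\|=\mathcal{O}(h^4)$ follows from Lemma~\ref{lemma2.3} exactly as in \eqref{eq2.15}; $\|R_3^n\|$ and $\|R_5^n\|$ are $\mathcal{O}(\tau^2)$ from the usual Taylor expansion of the centred operators $\delta_t^{(2)}$ and $\widetilde{(\cdot)}^n$ under $(\mathbf{A3})$; $\|R_4^n\|=\mathcal{O}(\tau^2)$ by $(\mathbf{A1})$ and a Taylor estimate on $\bar{\delta_t}-\partial_t$; and $\|R_2^n\|=\mathcal{O}(\tau^2+h^4)$ by combining \eqref{eq2.15} with the same Taylor estimate. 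The coefficient $\mathscr{A}\bar{\delta_t}u_j^n$ in the mismatch term is uniformly bounded because $|\partial_tu|\le\mathcal{Q}$ by $(\mathbf{A3})$, and I reproduce the argument that led to \eqref{eq2.30}: the Simpson quadrature estimate \eqref{eq2.27} together with $(\mathbf{A2})$ gives $\bigl|P(\|v(t_n)\|_B^2)-P\bigl(\int_0^1|v(x,t_n)|^2\,dx\bigr)\bigr|\le\mathcal{Q}h^4$, while $(\mathbf{A2})$ combined with Lemma~\ref{lemma2.4} gives $|P(\|V^n\|_B^2)-P(\|v(t_n)\|_B^2)|\le\mathcal{Q}\|\eta^n\|$.

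Applying Cauchy--Schwarz and Young's inequality, the mismatch contribution is bounded by $\frac{p_0}{2}\|\mathscr{A}\bar{\delta_t}\xi^n\|^2+\mathcal{Q}(h^8+\|\eta^n\|^2)$, so half of the dissipation is absorbed and a nonnegative remainder stays on the left. Summing over $n=1,\dots,m$, multiplying by $2\tau$, and telescoping yields
\begin{equation*}
\|\mathscr{A}\delta_t\xi^{m+1}\|^2+\tfrac{1}{2}\bigl(\|\mathscr{A}\eta^{m+1}\|^2+\|\mathscr{A}\eta^{m}\|^2\bigr)\le\|\mathscr{A}\delta_t\xi^1\|^2+\tfrac{1}{2}\bigl(\|\mathscr{A}\eta^1\|^2+\|\mathscr{A}\eta^0\|^2\bigr)+\mathcal{Q}(T)(\tau^2+h^4)^2+\mathcal{Q}\tau\sum_{k=0}^{m}\|\eta^k\|^2.
\end{equation*}
The starting data are controlled via \eqref{eq3.24} and \eqref{eq3.4}--\eqref{eq3.5}: the Taylor remainder in \eqref{eq3.24} gives $\|\delta_t\xi^1\|\le\mathcal{Q}\tau^2$, while $V^0=\Delta U^0$ and $V^1=\Delta U^1$ combined with a time-Taylor expansion of $u$ give $\|\eta^0\|+\|\eta^1\|\le\mathcal{Q}(\tau^2+h^4)$. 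A discrete Gr\"onwall inequality applied to the $\sum_k\|\eta^k\|^2$ tail, together with Lemma~\ref{lemma2.2} to pass back from $\|\mathscr{A}\cdot\|$ to $\|\cdot\|$, closes \eqref{qqq1}.

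The main obstacle will be the first-step analysis: the two-step scheme is bootstrapped by a local Taylor expansion in \eqref{eq3.5}, so obtaining the full $\tau^2$ rate (rather than losing a factor of $\tau$) relies delicately on the integral remainder form in \eqref{eq3.24} and on the consistency of $V^1=\Delta U^1$ with $v(\cdot,t_1)$ up to $\mathcal{O}(\tau^2+h^4)$. A secondary technicality is that the $P$-mismatch is bounded only by $\|\eta^n\|$ whereas the telescoping identity controls $\|\eta^{n+1}\|^2+\|\eta^n\|^2$; one must keep the summation index aligned so that the $\|\eta^k\|^2$ term on the right has a coefficient compatible with the discrete Gr\"onwall closure.
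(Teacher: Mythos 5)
Your proposal is correct and follows essentially the same route as the paper's proof: the same energy identity obtained by testing \eqref{eq3.21} with $\mathscr{A}\bar{\delta_t}\xi^n$ and \eqref{eq3.22} with $\mathscr{A}\widetilde{\eta}^n$, the same telescoping identities \eqref{eq3.27}, the same Simpson-plus-Lipschitz treatment \eqref{eq2.27}--\eqref{eq2.30} of the $P$-mismatch, the same truncation bounds for $(R_1)^n,\dots,(R_5)^n$, and a discrete Gr\"onwall closure. The only (immaterial) difference is bookkeeping: you absorb the residuals via Young's inequality and run a quadratic Gr\"onwall argument, whereas the paper keeps the right-hand side linear in $\|\xi^n\|_D$ and closes it by picking the maximizing index $m$ in \eqref{eq3.33}--\eqref{g1eq3.33}; both yield \eqref{qqq1} under ($\mathbf{A3}$).
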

\begin{proof} First, we make the inner product of \eqref{eq3.21} and \eqref{eq3.22} with $\mathscr{A}\bar{\delta_t}{\xi}^n$ and $\mathscr{A}\widetilde{\eta}^n$, respectively, then add the two resulting equation and apply the assumption ($\mathbf{A1}$) to obtain
\begin{equation}\label{eq3.26}
   \begin{split}
       & \langle\mathscr{A}\delta_t^{(2)}\xi^n,  \mathscr{A}\bar{\delta_t}{\xi}^n\rangle + p_0 \|\mathscr{A}\bar{\delta_t}{\xi}^n\|^2  + \langle\mathscr{A}\bar{\delta_t}\eta^n,  \mathscr{A}\widetilde{\eta}^n\rangle
         \leq \langle({R}_1)^n, \mathscr{A}\bar{\delta_t}{\xi}^n\rangle + \langle({R}_2)^n, \mathscr{A}\widetilde{\eta}^n\rangle\\
       & + \sum\limits_{m=3}^{5}\langle({R}_m)^n, \mathscr{A}\bar{\delta_t}{\xi}^n\rangle  + \left[  P\left( \|{V}^n\|_B^2 \right) -  P\left(
        \int_0^1|v(x,t_n)|^2 dx \right)  \right] \langle\mathscr{A}\bar{\delta_t}{u}^n, \mathscr{A}\bar{\delta_t}{\xi}^n\rangle.
   \end{split}
   \end{equation}
Notice that
\begin{equation}\label{eq3.27}
   \begin{split}
        & \langle \mathscr{A}\delta_t^{(2)} {\xi}^n, \mathscr{A}\bar{\delta_t}{\xi}^n \rangle
        = \frac{1}{2\tau} \left[ \|\mathscr{A}\delta_t {\xi}^{n+1}\|^2 -  \|\mathscr{A}\delta_t {\xi}^{n}\|^2 \right], \\
        & \langle \mathscr{A}\bar{\delta_t}{\eta}^n, \mathscr{A} \widetilde{\eta}^n \rangle
        = \frac{1}{4\tau}\left[ \| \mathscr{A}{\eta}^{n+1}\|^2 - \|\mathscr{A}{\eta}^{n-1}\|^2 \right].
   \end{split}
   \end{equation}
We apply \eqref{eq2.27}-\eqref{eq2.30}, \eqref{eq3.27} and the Cauchy-Schwarz inequality to yield that
\begin{equation}\label{eq3.28}
   \begin{split}
       & \frac{1}{2\tau}\left[\|\mathscr{A}\delta_t\xi^{n+1}\|^2-\|\mathscr{A}\delta_t\xi^{n}\|^2\right] + p_0 \|\mathscr{A}\bar{\delta_t}{\xi}^n\|^2 \\
       & + \frac{1}{4\tau}\left[\|\mathscr{A}\eta^{n+1}\|^2+\|\mathscr{A}\eta^{n}\|^2-\left(\|\mathscr{A}\eta^{n}\|^2+\|\mathscr{A}\eta^{n-1}\|^2\right) \right]\\
       & \leq \sum\limits_{m=3}^{5}\|({R}_m)^n\|\|\mathscr{A}\bar{\delta_t}{\xi}^n\| + \|({R}_1)^n\|\|\mathscr{A}\bar{\delta_t}{\xi}^n\|\\
       & + \|({R}_2)^n\| \|\mathscr{A}\widetilde{\eta}^n\|+ \mathcal{Q}\left(h^4+\|\eta^n\|\right)\|\mathscr{A}\bar{\delta_t}\xi^n\|,
   \end{split}
   \end{equation}
where
\begin{equation*}
    \begin{split}
          |\bar{\delta_t}u_j^n|=\left|\partial_tu(x_j,t_n)+\frac{1}{2\tau}\left(\int_{t_n}^{t_{n+1}}(t_n-s)\partial_s^2u(x_j,s)ds-\int_{t_{n-1}}^{t_{n}}(s-t_{n-1})\partial_s^2u(x_j,s)ds\right)\right|
          \leq \mathcal{Q},
    \end{split}
\end{equation*}
this implies that $\|\bar{\delta_t}u^n\|\leq \mathcal{Q}$. Next, we define
\begin{equation}\label{eq3.29}
    \begin{split}
        \|\xi^n\|_D:=\sqrt{\|\mathscr{A}\delta_t\xi^{n+1}\|^2+\frac{1}{2}\left(\|\mathscr{A}\eta^{n+1}\|^2+\|\mathscr{A}\eta^{n}\|^2\right)}.
    \end{split}
\end{equation}
Then, we sum \eqref{eq3.28} regarding $n$ from $1$ to $N$ and multiply the result inequality by $2\tau$ to obtain
\begin{equation}\label{eq3.283}
   \begin{split}
       & \|\xi^N\|_D^2 -  \|\xi^0\|_D^2 +2\tau p_0 \sum_{n=1}^N\|\mathscr{A}\bar{\delta_t}{\xi}^n\|^2 \leq 2\tau\sum_{n=1}^N\sum\limits_{m=3}^{5}\|({R}_m)^n\|\|\mathscr{A}\bar{\delta_t}{\xi}^n\| \\
       & + 2\tau\sum_{n=1}^N\|({R}_1)^n\|\|\mathscr{A}\bar{\delta_t}{\xi}^n\| + 2\tau\sum_{n=1}^N\|({R}_2)^n\| \|\mathscr{A}\widetilde{\eta}^n\|+ \mathcal{Q}\tau\sum_{n=1}^N\left(h^4+\|\eta^n\|\right)\|\mathscr{A}\bar{\delta_t}\xi^n\|.
   \end{split}
   \end{equation}
Note that
\begin{equation}\label{eq3.31}
   \begin{split}
        &\|\mathscr{A}\bar{\delta_t} {\xi}^n\|= \left\|\frac{\mathscr{A}\xi^{n+1}-\mathscr{A}\xi^{n}+\mathscr{A}\xi^{n}-\mathscr{A}\xi^{n-1}}{2\tau}\right\|\leq \frac{\|\mathscr{A}\delta_t \xi^{n+1}\|+\|\mathscr{A}\delta_t \xi^{n}\|}{2}\leq \frac{\| \xi^{n}\|_D+\|\xi^{n-1}\|_D}{2},
   \end{split}
   \end{equation}
and
\begin{equation}\label{eq3.32}
   \begin{split}
        &\|\mathscr{A}\widetilde{\eta}^n\|= \left\|\frac{\mathscr{A}\eta^{n+1}+\mathscr{A}\eta^{n-1}}{2}\right\|\leq \frac{\|\mathscr{A} \eta^{n+1}\|+\|\mathscr{A}\eta^{n-1}\|}{2}\leq \frac{\sqrt{2}}{2}\left(\| \xi^{n}\|_D+\|\xi^{n-1}\|_D\right).
   \end{split}
   \end{equation}
We now choose an appropriate $m$ such that $\|\xi^m\|_D=\max\limits_{0\leq n\leq N}\|\xi^n\|_D$ and use \eqref{eq3.31}-\eqref{eq3.32} to get
\begin{equation}\label{eq3.33}
   \begin{split}
       &\|\xi^m\|_D^2 \leq \|\xi^0\|_D\|\xi^m\|_D + \mathcal{Q}\tau\sum_{n=1}^N\sum\limits_{m=1}^{5}\|({R}_m)^n\|\|\xi^m\|_D +  \mathcal{Q}\tau\sum_{n=1}^N\left(h^4+\|\xi^n\|_D\right)\|\xi^m\|_D.
   \end{split}
   \end{equation}
 Consequently,
   \begin{equation}\label{g1eq3.33}
   \begin{split}
        \|\xi^N\|_D \leq \|\xi^m\|_D \leq \|\xi^0\|_D + \mathcal{Q}\tau\sum_{n=1}^N\sum\limits_{m=1}^{5}\|({R}_m)^n\| +  \mathcal{Q}\tau\sum_{n=1}^N\left(h^4+\|\xi^n\|_D\right).
   \end{split}
   \end{equation}
Subsequently, we shall analyse $\|\xi^0\|_D$. First, we apply $\xi^0=0$ to arrive at
\begin{equation}\label{eq3.34}
   \begin{split}
        \xi^1_j=u^1_j-U^1_j=\frac{1}{2}\int_{0}^{t_1}(s-t_1)^2\partial_s^3u(x_j,s)ds.
   \end{split}
   \end{equation}
Note that $\eta^0=0 $, we similarly can obtain
\begin{equation}\label{eq3.35}
   \begin{split}
         \eta^1_j=v^1_j-V^1_j=\frac{1}{2}\int_{0}^{t_1}(s-t_1)^2\partial_s^3v(x_j,s)ds.
   \end{split}
   \end{equation}
Consequently,
\begin{equation}\label{eq3.36}
   \begin{split}
        \|\xi^0\|_D \leq \mathcal{Q}\int_{0}^{t_1}(s-t_1)^2(\|\partial_s^3u(s)\|+\|\partial_s^3v(s)\|)ds \;\leq \mathcal{Q}\tau^2.
   \end{split}
   \end{equation}
We employ the Taylor expansion with integral remainder to get
\begin{equation*}
   \begin{split}
          &u''_j(t_n)-\delta_t^{(2)}u^n_j=\frac{-1}{6\tau^2}\left[\int_{t_n}^{t_{n+1}}(t_{n+1}-s)^3\partial_s^4u(x_j,s)ds+\int_{t_{n-1}}^{t_{n}}(s-t_{n-1})^3\partial_s^4u(x_j,s)ds\right],\\
          &n\geq 2, \quad u''_j(t_1)-\delta_t^{(2)}u^1_j=\frac{-1}{2\tau^2}\left[\int_{t_1}^{t_{2}}(t_{2}-s)^2\partial_s^3u(x_j,s)ds+\int_{0}^{t_{1}}s^2\partial_s^3u(x_j,s)ds\right],
   \end{split}
   \end{equation*}
therefore, we obtain that
\begin{equation}\label{eq3.38}
   \begin{split}
          \tau\sum_{n=1}^N\|({R}_3)^n\|\leq\tau\int_0^{2\tau}\|\partial_s^3u(s)\|ds+\tau^2\int_\tau^{t_{N+1}}\|\partial_s^3u(s)\|ds\;\leq \mathcal{Q}\tau^2. 
   \end{split}
   \end{equation}
Then for $({R}_4)_j^n $, we have
\begin{equation*}
   \begin{split}
    u'_j(t_n)-\bar{\delta_t}u^n_j=\frac{-1}{4\tau}\left[\int_{t_n}^{t_{n+1}}(t_{n+1}-s)^2\partial_s^3u(x_j,s)ds+\int_{t_{n-1}}^{t_{n}}(s-t_{n-1})^2\partial_s^3u(x_j,s)ds\right],
   \end{split}
   \end{equation*}
thus, we further understand that
\begin{equation}\label{eq3.39}
   \begin{split}
    \tau\sum_{n=1}^N\|({R}_4)^n\|\leq \frac{p_0}{2}\tau^2\int_0^{t_{N+1}}\|\partial_s^3u(s)\|ds\;\leq \mathcal{Q}\tau^2. 
   \end{split}
   \end{equation}
   Then, we use \eqref{eq3.20} and Lemma \ref{lemma2.3} to yield
\begin{equation}\label{eq3.37}
   \begin{split}
          \tau\sum_{n=1}^N\sum\limits_{m=1}^{2}\|({R}_m)^n\|\leq \mathcal{Q}\left(h^4+\tau^2\int_0^{t_{N+1}}\|\partial_s^3u(s)\|ds\right)\;\leq \mathcal{Q}(\tau^2+h^4).
   \end{split}
   \end{equation}
For the term $({R}_5)_j^n$ in \eqref{eq3.20}, we obtain that
\begin{equation*}
   \begin{split}
          v_j^n-\widetilde{v}_j^n=\frac{-1}{2}\left[\int_{t_n}^{t_{n+1}}(t_{n+1}-s)\partial_s^2v(x_j,s)ds+\int_{t_{n-1}}^{t_{n}}(s-t_{n-1})\partial_s^2v(x_j,s)ds\right],
   \end{split}
   \end{equation*}
hence,
\begin{equation}\label{eq3.40}
   \begin{split}
          \tau\sum_{n=1}^N\|({R}_5)^n\|\leq \tau^2\int_0^{t_{N+1}}\|\partial_s^2v(s)\|ds\;\leq \mathcal{Q}\tau^2. 
   \end{split}
   \end{equation}
We substitute \eqref{eq3.36}-\eqref{eq3.40} into \eqref{eq3.33} and use the discrete Gr\"onwall's lemma to yield \eqref{qqq1}. This finishes the proof.
\end{proof}

\section{Plate problem: Fully discrete scheme} \label{sec4}

This section will establish a 2D compact difference scheme and analyze its stability and convergence for problem \eqref{eq4.1}-\eqref{eq4.4}.

\subsection{Construction of fully discrete scheme}
Let $\Gamma=[0,1]^2$ and $\mathbf{x}=(x,y)$. We take two positive integers $J_1$ and $J_2$, and let spatial step sizes $h_1=\frac{1}{2J_1}$ and $h_2=\frac{1}{2J_2}$, with the spatial grid points $x_i=ih_1(0\leq i\leq 2J_1)$ and $y_j=jh_2(0\leq j\leq 2J_2)$. Suppose that $\bar{\Gamma}_h=\{(x_i,y_j)|0\leq i\leq 2J_1,\;0\leq j\leq 2J_2\}$, $\Gamma_h=\Gamma\cap \bar{\Gamma}_h$ and $\partial\Gamma_h=\bar{\Gamma}_h\cap\partial\Gamma$. In addition, we give the grid function $\Gamma_\tau=\{t_n|0\leq n\leq N+1\}$.
 For any grid function $\omega=\{\omega_{ij}^n|0\leq i\leq 2J_1,\;0\leq j\leq 2J_2,\;0\leq n\leq N+1\}$ on $\bar{\Gamma}_h\times \Gamma_\tau$, denote
\begin{equation}\label{eq4.5}
   \begin{split}
        & u_{ij}^n:=u(x_i,y_j,t_n), \quad U_{ij}^n \simeq u(x_i,y_j, t_n), \quad
        \delta_{x}U_{ij}^n := \frac{U_{i+1,j}^n-U_{ij}^n}{h_1}, \\
        & \delta_{\Bar{x}}U_{ij}^n := \frac{U_{ij}^n-U_{i-1,j}^n}{h_1}, \quad  \delta_{x}\delta_{\Bar{x}}U_{ij}^n := \frac{\delta_{x}U_{ij}^n - \delta_{\Bar{x}}U_{ij}^n}{h_1},\\
        & \mathcal{A}U_{ij}^n:= \begin{cases}
    {\frac{U_{i+1,j}^n+10U_{ij}^n+U_{i-1,j}^n}{12} = \left(I_x+\frac{h_1^2}{12}\delta_{x}\delta_{\Bar{x}}\right)U_{ij}^n, \; 1\leq i\leq 2J_1-1; }\\
    {U_{ij}^n, \quad i=0,\;2J_1,\; 0\leq j\leq 2J_2,}
    \end{cases}\\
        & \mathcal{B}U_{ij}^n:= \begin{cases}
    {\frac{U_{i,j+1}^n+10U_{ij}^n+U_{i,j-1}^n}{12} = \left(I_y+\frac{h_2^2}{12}\delta_{y}\delta_{\Bar{y}}\right)U_{ij}^n, \; 1\leq j\leq 2J_2-1; }\\
    {U_{ij}^n, \quad j=0,\;2J_2,\; 0\leq i\leq 2J_1,}
    \end{cases}
   \end{split}
   \end{equation}
where $I_x$ and $I_y$ denote the identical operator. Other symbols $\delta_{y}U_{ij}^n, \delta_{\Bar{y}}U_{ij}^n$ and $\delta_{y}\delta_{\Bar{y}}U_{ij}^n$ can be defined analogously. Furthermore, we define the following notations
\begin{equation}\label{eq4.6}
   \begin{split}
\mathcal{H}\omega_{ij}:=\mathcal{A}\mathcal{B}\omega_{ij}, \quad \Phi \omega_{ij}:=\left(\mathcal{B}\delta_{x}\delta_{\Bar{x}}+\mathcal{A}\delta_{y}\delta_{\Bar{y}}\right)\omega_{ij}.
   \end{split}
   \end{equation}
First,
 we define a grid function space $W_h:=\{\omega|\omega=\{\omega_{ij}|(x_i,y_j)\in\Gamma_h\} ,\;\omega_{ij}=0, (x_i,y_j)\in\partial\Gamma_h\}$. Then for any grid function $\omega, \nu\in W_h$, we define the following discrete inner products and norms
\begin{equation}\label{eq4.7}
   \begin{split}
   &\langle \omega, \nu \rangle :=h_1h_2\sum_{i=1}^{2J_1-1}\sum_{j=1}^{2J_2-1}\omega_{ij}\nu_{ij},\quad \|\omega\| := \sqrt{\langle \omega, \omega \rangle},\quad \|\omega\|_\infty:=\max_{1\leq i\leq 2J_1,\; 1\leq j\leq 2J_2}|\omega_{ij}|.
   \end{split}
\end{equation}
We define the following novel discrete norms to facilitate analysis
\begin{equation}\label{g1eq4.7}
   \begin{split}
    \|\omega\|_E:=\frac{2}{3}\sqrt{h_1h_2\sum_{i=1}^{J_1}\sum_{j=1}^{J_2}\left(\omega^2_{2i-2,2j-1}+\omega^2_{2i-1,2j-2}+3\omega^2_{2i-1,2j-1}\right)},\quad \|\omega\|_F:= \sqrt{\frac{4}{9}\|\omega\|^2+\|\omega\|_E^2}.
   \end{split}
\end{equation}
We then introduce the 2D six-point composite Simpson's rule \cite{Chapra, ChapraCanale}, which plays a crucial role in the approximation of the nonlinear damping term. On a small interval $[x_{2i-2}, x_{2i}]\times [y_{2j-2}, y_{2j}]$ for $1\leq i\leq J_1$ and $1\leq j\leq J_2$, the 2D six-point composite Simpson's rule is given as follows
\begin{equation}\label{eq4.9}
   \begin{split}
   &\int_{x_{2i-2}}^{x_{2i}}\int_{y_{2j-2}}^{y_{2j}}\omega(x,y)dxdy \approx\frac{h_1h_2}{36}\Big[\omega_{2i-2,2j-2}+4\omega_{2i-2,2j-1}+\omega_{2i-2,2j}+4\omega_{2i-1,2j-2}\\
   &+16\omega_{2i-1,2j-1}+4\omega_{2i-1,2j}+\omega_{2i,2j-2}+4\omega_{2i,2j-1}+\omega_{2i,2j}\Big], \quad\omega_{i,j}=\omega(x_i,y_j).
   \end{split}
   \end{equation}
 Consequently,
\begin{equation}\label{eq4.99}
   \begin{split}
   &\int_{\Gamma} \omega(x,y) dxdy \approx \sum_{i=1}^{J_1}\sum_{j=1}^{J_2}\frac{h_1h_2}{36}\Big[4\omega_{2i-2,2j-2}+8\omega_{2i-2,2j-1}+8\omega_{2i-1,2j-2}+16\omega_{2i-1,2j-1}\Big].
   \end{split}
   \end{equation}
Similar to the 1D case, we directly establish the fully discrete compact difference scheme for $1\leq i\leq 2J_1-1$, $1\leq j\leq 2J_2-1$ and $1\leq n\leq N$,
\begin{equation}\label{eq4.10}
   \begin{split}
        \mathcal{H}\delta_t^{(2)} U_{ij}^n + P\left( \|V^n\|_F^2 \right) \mathcal{H} \bar{\delta_t} U_{ij}^n +  \Phi\widetilde{V}_{ij}^n= \mathcal{H}f_{ij}^n, \quad n\geq 1,
   \end{split}
   \end{equation}
   \begin{equation}\label{eq4.11}
   \begin{split}
        \mathcal{H}\bar{\delta_t}V_{ij}^n = \Phi\bar{\delta_t}U_{ij}^n,
   \end{split}
   \end{equation}
\begin{equation}\label{eq4.12}
   \begin{split}
        & U_{0,j}^n = U_{2J_1,j}^n = U_{i,0}^n = U_{i,2J_2}^n = 0, \quad V_{0,j}^n = V_{2J_1,j}^n = V_{i,0}^n = V_{i,2J_2}^n = 0,
   \end{split}
   \end{equation}
\begin{equation}\label{eq4.13}
   \begin{split}
        U_{ij}^0 = u_0(x_i,y_j), \quad \delta_tU_{ij}^1 = u_1(x_i,y_j)+\frac{\tau}{2}u_2, \quad V^0=\Delta U^0, \quad V^1=\Delta U^1.
   \end{split}
   \end{equation}
Throughout this section, we adopt the following regularity assumptions for the solution $u(\mathbf{x},t)$
\begin{equation*}
 \begin{split}
(\mathbf{A4})\;|\partial_t^4u(\mathbf{x},t)|\leq \mathcal{Q}, \; |\partial_t^3\partial_k^2u(\mathbf{x},t)|\leq \mathcal{Q},\; |\partial_k^6u(\mathbf{x},t)|\leq \mathcal{Q}, \;|\partial_t\partial_k^4u(\mathbf{x},t)|\leq \mathcal{Q},\; (\mathbf{x},t) \in \Gamma\times [0,T], \;  k=x,y.
  \end{split}
\end{equation*}

 \subsection{Stability and dissipative property}

Next, we will introduce the following lemma that will be used for the subsequent theoretical analysis.
\begin{lemma}\cite{Liao, Xie, Zhang}\label{lemma4.1}
  For any $\omega\in W_h$, we have $\frac{4}{9}\|\omega\|\leq \|\mathcal{H}\omega\|\leq \|\omega\|$.
\end{lemma}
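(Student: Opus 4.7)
The plan is to exploit the tensor-product structure $\mathcal{H}=\mathcal{A}\mathcal{B}$: the operator $\mathcal{A}$ acts only along the $x$-direction and $\mathcal{B}$ only along the $y$-direction, so they commute and each can be treated by a ``slicing + 1D'' argument. The relevant 1D fact is that on the space of grid functions vanishing at the boundary, the compact averaging operator $\mathscr{A}=I+\tfrac{h^2}{12}\delta_x\delta_{\bar x}$ has eigenvalues $\lambda_k = 1-\tfrac{1}{3}\sin^2\!\bigl(\tfrac{k\pi h}{2}\bigr)\in[\tfrac{2}{3},1]$; in particular $\tfrac{2}{3}\|v\|\le\|\mathscr{A}v\|\le\|v\|$. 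This is sharper than the $\sqrt{3}/3$ version recorded in Lemma \ref{lemma2.2}, and the sharper $\tfrac{2}{3}$ bound is what is needed to squeeze out the constant $\tfrac{4}{9}$ in 2D. I would first restate this sharper 1D estimate (proving it via the sine-basis spectral calculation above, which is routine since $\delta_x\delta_{\bar x}$ is diagonalized by discrete sine modes on $\mathcal V^0$).

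For the upper bound, I would fix $j$ and apply the 1D upper estimate to the row $\{\omega_{ij}\}_i\in\mathcal V^0$:
\begin{equation*}
h_1\sum_{i=1}^{2J_1-1}(\mathcal{A}\omega)_{ij}^2 \le h_1\sum_{i=1}^{2J_1-1}\omega_{ij}^2 .
\end{equation*}
Multiplying by $h_2$ and summing over $j$ yields $\|\mathcal{A}\omega\|\le\|\omega\|$. The same slicing argument in the $y$-direction gives $\|\mathcal{B}w\|\le\|w\|$ for any $w\in W_h$. Since $\mathcal{B}\omega$ inherits the zero boundary conditions (because $\mathcal{B}$ does not touch the $x$-boundary rows and on interior $j$ it is a convex combination of neighboring values in $y$), we can chain the two estimates: $\|\mathcal{H}\omega\|=\|\mathcal{A}(\mathcal{B}\omega)\|\le\|\mathcal{B}\omega\|\le\|\omega\|$.

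For the lower bound I would run the identical slicing argument with the sharper 1D estimate $\|\mathscr{A}v\|\ge\tfrac{2}{3}\|v\|$:
\begin{equation*}
\|\mathcal{A}\omega\|^2=h_1 h_2\sum_{j}\sum_{i}(\mathcal{A}\omega)_{ij}^2\ge \tfrac{4}{9}\,h_1 h_2\sum_{j}\sum_{i}\omega_{ij}^2 = \tfrac{4}{9}\|\omega\|^2,
\end{equation*}
and analogously $\|\mathcal{B}w\|\ge\tfrac{2}{3}\|w\|$. Applying this to $w=\mathcal{B}\omega$ yields
\begin{equation*}
\|\mathcal{H}\omega\|=\|\mathcal{A}(\mathcal{B}\omega)\|\ge \tfrac{2}{3}\|\mathcal{B}\omega\|\ge \tfrac{2}{3}\cdot\tfrac{2}{3}\|\omega\|=\tfrac{4}{9}\|\omega\|,
\end{equation*}
which completes the lemma.

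The main obstacle, and the reason one cannot simply cite Lemma \ref{lemma2.2} twice, is that naively squaring the $\sqrt{3}/3$ 1D constant gives only $\tfrac{1}{3}<\tfrac{4}{9}$. The sharper 1D lower bound $\tfrac{2}{3}$ is essential, and the cleanest justification is via the explicit sine-eigenbasis computation of $\mathcal{A}$ on $\mathcal V^0$; any purely algebraic ``integration-by-parts'' route using $\langle\omega,\delta_x\delta_{\bar x}\omega\rangle=-\|\delta_x\omega\|^2$ will also work but tends to be more cumbersome. Beyond this, the 2D step is essentially bookkeeping: verifying that the intermediate function $\mathcal{B}\omega$ still lies in the admissible boundary-zero space so that the 1D estimate is legitimately applicable in the second direction.
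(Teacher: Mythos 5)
Your proof is correct. The paper itself gives no argument for this lemma---it is quoted directly from the cited references---so there is nothing to compare against line by line; your reconstruction is essentially the standard one found in those sources. The two load-bearing observations are both right and both necessary: (i) the sharper one-dimensional spectral bound $\tfrac{2}{3}\|v\|\le\|\mathscr{A}v\|\le\|v\|$ on $\mathcal{V}^0$, coming from the eigenvalues $1-\tfrac{1}{3}\sin^2(k\pi h/2)\in(\tfrac{2}{3},1)$ of $\mathscr{A}$ in the orthogonal discrete sine basis, since squaring the weaker constant $\sqrt{3}/3$ of Lemma \ref{lemma2.2} would only yield $\tfrac{1}{3}<\tfrac{4}{9}$; and (ii) the verification that $\mathcal{B}\omega$ again vanishes on $\partial\Gamma_h$ (it equals $\omega$ on the rows $j=0,2J_2$ and is an average of boundary-zero values on the columns $i=0,2J_1$), which legitimizes applying the one-dimensional estimate slice by slice in the second direction to get $\|\mathcal{A}(\mathcal{B}\omega)\|\ge\tfrac{2}{3}\|\mathcal{B}\omega\|\ge\tfrac{4}{9}\|\omega\|$ and $\|\mathcal{A}(\mathcal{B}\omega)\|\le\|\mathcal{B}\omega\|\le\|\omega\|$. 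I find no gap.
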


\begin{lemma}\label{lemma4.2}
 For any $\omega\in W_h$,  we have
  \begin{equation}
    \begin{split}
    \|\omega\|_E\leq \frac{2\sqrt{3}}{3}\|\omega\|,\quad
    \frac{2}{3}\|\omega\|\leq \|\omega\|_F\leq \frac{4}{3}\|\omega\|.
   \end{split}
   \end{equation}
\end{lemma}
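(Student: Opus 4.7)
The proof is an elementary bookkeeping argument from the definitions in \eqref{g1eq4.7} and \eqref{eq4.7}, in the same spirit as the omitted proof of Lemma \ref{lemma2.4} in the 1D case. The plan is to re-express $\|\omega\|_E^2$ as a weighted sum over the very same interior indices $(k,\ell)$, $1\leq k\leq 2J_1-1$, $1\leq \ell\leq 2J_2-1$, that appear in $\|\omega\|^2$, and then compare coefficient by coefficient.

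First I would classify every interior index by the parities of $k$ and $\ell$. Note that the term $\omega^2_{2i-2,2j-1}$ with $i=1$ sits on the boundary column $k=0$ and hence vanishes because $\omega\in W_h$; similarly $\omega^2_{2i-1,2j-2}$ with $j=1$ vanishes. After discarding these boundary entries and reindexing, the double sum inside $\|\omega\|_E^2$ becomes a sum over \emph{interior} grid points with coefficient $3$ at each odd-odd point, coefficient $1$ at each odd-even or even-odd point, and coefficient $0$ at each even-even point. Since the maximum coefficient is $3$, I immediately obtain
$$\|\omega\|_E^2 \;\leq\; \frac{4}{9}\cdot 3 \cdot h_1 h_2 \sum_{k=1}^{2J_1-1}\sum_{\ell=1}^{2J_2-1}\omega_{k\ell}^2 \;=\; \frac{4}{3}\,\|\omega\|^2,$$
which, after taking square roots, yields $\|\omega\|_E\leq \tfrac{2\sqrt{3}}{3}\|\omega\|$.

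For $\|\omega\|_F$ I would simply substitute the bound just obtained into its definition:
$$\|\omega\|_F^2 \;=\; \frac{4}{9}\|\omega\|^2 + \|\omega\|_E^2 \;\leq\; \frac{4}{9}\|\omega\|^2 + \frac{4}{3}\|\omega\|^2 \;=\; \frac{16}{9}\|\omega\|^2,$$
giving $\|\omega\|_F\leq \tfrac{4}{3}\|\omega\|$. The lower bound $\|\omega\|_F\geq \tfrac{2}{3}\|\omega\|$ is immediate from $\|\omega\|_E^2\geq 0$, which forces $\|\omega\|_F^2\geq \tfrac{4}{9}\|\omega\|^2$.

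No step here presents a serious obstacle; the only subtle point is making sure the boundary-index terms in the definition of $\|\omega\|_E$ are cleanly discarded before the coefficient counting, so that the reindexed weighted sum is genuinely controlled by $\tfrac{4}{3}h_1 h_2$ times the sum of squares over the full interior grid. Once that reindexing is performed, both inequalities reduce to one-line arithmetic, which is why, as with Lemma \ref{lemma2.4}, I expect the printed proof to be brief or omitted.
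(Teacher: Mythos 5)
Your proposal is correct and follows essentially the same route as the paper: the printed proof simply bounds the weighted three-term sum in the definition of $\|\omega\|_E$ by $3\left(\omega^2_{2i-2,2j-2}+\omega^2_{2i-2,2j-1}+\omega^2_{2i-1,2j-2}+\omega^2_{2i-1,2j-1}\right)$, whose total over $i,j$ is exactly $\|\omega\|^2/(h_1h_2)$, which is your coefficient-by-coefficient comparison in condensed form. The $\|\cdot\|_F$ bounds are then obtained from the definition exactly as you do.
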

\begin{proof}
From \eqref{eq4.7}, we can obtain that
\begin{equation}\label{eq4.8}
   \begin{split}
\|\omega\|_E\leq \frac{2}{3}\sqrt{3h_1h_2\sum_{i=1}^{J_1}\sum_{j=1}^{J_2}\left(\omega^2_{2i-2,2j-2}+\omega^2_{2i-2,2j-1}+\omega^2_{2i-1,2j-2}+\omega^2_{2i-1,2j-1}\right)}=\frac{2\sqrt{3}}{3}\|\omega\|.
   \end{split}
   \end{equation}
Based on \eqref{eq4.8} and the definition of $\|\cdot\|_F$, the proof can be established.
\end{proof}
\begin{theorem}\label{theorem4.1} Let $\{U_{ij}^n|(x_i,y_j)\in \Gamma_h,\;1\leq n\leq N\}$ and $\{V_{ij}^n|(x_i,y_j)\in \Gamma_h,\;1\leq n\leq N\}$ be the solution of \eqref{eq4.10}-\eqref{eq4.13}. Assume that the assumption ($\mathbf{A4}$) holds, we have
  \begin{equation}\label{UU1}
   \begin{split}
          \|U^N\|_a\leq
          \|U^0\|_a+2\tau\sum_{j=1}^N\|f^j\|,
   \end{split}
   \end{equation}
where $\|U^n\|_a:=\sqrt{\|\mathcal{H}\delta_t {U}^{n+1}\|^2+\frac{1}{2}\left(\|\mathcal{H}V^{n+1}\|^2+\|\mathcal{H}V^{n}\|^2\right)}$.
\end{theorem}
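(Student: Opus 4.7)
The plan is to mirror the 1D argument of Theorem \ref{theorem3.1} with the 2D operators $\mathcal{H}=\mathcal{A}\mathcal{B}$ and $\Phi=\mathcal{B}\delta_{x}\delta_{\Bar{x}}+\mathcal{A}\delta_{y}\delta_{\Bar{y}}$ replacing $\mathscr{A}$ and $\delta_{x}\delta_{\Bar{x}}$. First I would take the discrete inner product of \eqref{eq4.10} with $\mathcal{H}\bar{\delta_t}U^n$ and of \eqref{eq4.11} with $\mathcal{H}\widetilde{V}^n$, and add the two identities. The key algebraic step is to cancel the two cross terms $\langle \Phi \widetilde{V}^n,\mathcal{H}\bar{\delta_t}U^n\rangle$ and $\langle \Phi\bar{\delta_t}U^n,\mathcal{H}\widetilde{V}^n\rangle$ via a 2D self-adjointness relation $\langle \mathcal{H}\omega,\Phi\nu\rangle=\langle \Phi\omega,\mathcal{H}\nu\rangle$ for $\omega,\nu\in W_h$. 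This reduces to applying Lemma \ref{lemma2.1} separately in the $x$- and $y$-directions after noting that the tensor-product factors $\mathcal{A},\delta_{x}\delta_{\Bar{x}}$ act only in $x$ and $\mathcal{B},\delta_{y}\delta_{\Bar{y}}$ act only in $y$, so all four operators pairwise commute and each pair $(\mathcal{A},\delta_x\delta_{\bar x})$, $(\mathcal{B},\delta_y\delta_{\bar y})$ is symmetric under $\langle\cdot,\cdot\rangle$.

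Once the cross terms are eliminated, the damping contribution $P(\|V^n\|_F^2)\|\mathcal{H}\bar{\delta_t}U^n\|^2$ is nonnegative by assumption $(\mathbf{A1})$ and can be dropped. The time-stepping identities
\begin{equation*}
\langle \mathcal{H}\delta_t^{(2)}U^n,\mathcal{H}\bar{\delta_t}U^n\rangle=\frac{1}{2\tau}\bigl[\|\mathcal{H}\delta_t U^{n+1}\|^2-\|\mathcal{H}\delta_t U^{n}\|^2\bigr],\quad \langle \mathcal{H}\bar{\delta_t}V^n,\mathcal{H}\widetilde{V}^n\rangle=\frac{1}{4\tau}\bigl[\|\mathcal{H}V^{n+1}\|^2-\|\mathcal{H}V^{n-1}\|^2\bigr]
\end{equation*}
carry over verbatim. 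Telescoping after multiplying by $2\tau$ and summing from $n=1$ to $N$ yields
\begin{equation*}
\|U^N\|_a^2-\|U^0\|_a^2+2\tau p_0\sum_{n=1}^N\|\mathcal{H}\bar{\delta_t}U^n\|^2\leq 2\tau\sum_{n=1}^N\|\mathcal{H}f^n\|\,\|\mathcal{H}\bar{\delta_t}U^n\|,
\end{equation*}
and Lemma \ref{lemma4.1} bounds $\|\mathcal{H}f^n\|\leq\|f^n\|$.

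Next I would estimate the right-hand side by $\|\mathcal{H}\bar{\delta_t}U^n\|\le \tfrac12(\|U^n\|_a+\|U^{n-1}\|_a)$, which follows from the triangle inequality exactly as in \eqref{eq3.14} using the definition of $\|\cdot\|_a$. Substituting into the energy inequality and choosing $m$ with $\|U^m\|_a=\max_{0\le n\le N}\|U^n\|_a$, the quadratic inequality $\|U^m\|_a^2\le\|U^0\|_a\|U^m\|_a+2\tau\sum_{n=1}^N\|f^n\|\,\|U^m\|_a$ gives, after cancellation of $\|U^m\|_a$, the required bound $\|U^N\|_a\le \|U^m\|_a\le\|U^0\|_a+2\tau\sum_{n=1}^N\|f^n\|$.

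The main obstacle I expect is verifying the 2D self-adjoint/commutation identity $\langle \mathcal{H}\omega,\Phi\nu\rangle=\langle \Phi\omega,\mathcal{H}\nu\rangle$ cleanly in the tensor-product compact setting; once that is in hand the rest is a direct transplant of the 1D proof. The regularity assumption $(\mathbf{A4})$ is only used implicitly to ensure the initial value $U^1$ is well-defined through \eqref{eq4.13} and does not enter the estimate itself, so the conclusion \eqref{UU1} follows by setting $f\equiv 0$ for the dissipation corollary, analogous to Corollary \ref{coro1}.
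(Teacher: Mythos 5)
Your proposal is correct and follows essentially the same route as the paper: inner products with $\mathcal{H}\bar{\delta_t}U^n$ and $\mathcal{H}\widetilde{V}^n$, cancellation of the cross terms via the identity $\langle \mathcal{H}\omega,\Phi\nu\rangle=\langle \Phi\omega,\mathcal{H}\nu\rangle$ (which the paper also verifies by expanding $\mathcal{H}=\mathcal{A}\mathcal{B}$, $\Phi=\mathcal{B}\delta_x\delta_{\bar x}+\mathcal{A}\delta_y\delta_{\bar y}$ and using the directionwise symmetry), the same telescoping identities, Lemma \ref{lemma4.1}, the bound $\|\mathcal{H}\bar{\delta_t}U^n\|\le\tfrac12(\|U^n\|_a+\|U^{n-1}\|_a)$, and the maximizing-index argument. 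No gaps.
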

\begin{proof} Take the inner product of \eqref{eq4.10} with $\mathcal{H}\bar{\delta_t} {U}^n$ and \eqref{eq4.11} with $\mathcal{H} \widetilde{V}^n$, respectively, then we have
\begin{equation}\label{eq4.14}
   \begin{split}
         \langle \mathcal{H}\delta_t^{(2)}{U}^n, \mathcal{H}\bar{\delta_t} {U}^n \rangle  +  P\left( \|V^n\|_F^2 \right) \|\mathcal{H}\bar{\delta_t} {U}^n\|^2 + \langle \Phi\widetilde{V}^n, \mathcal{H}\bar{\delta_t} {U}^n \rangle
         = \langle \mathcal{H}{f}^n, \mathcal{H}\bar{\delta_t} {U}^n \rangle,
   \end{split}
   \end{equation}
   \begin{equation}\label{eq4.15}
   \begin{split}
         \langle \mathcal{H}\bar{\delta_t}{V}^n, \mathcal{H} \widetilde{V}^n \rangle
         = \langle \Phi\bar{\delta_t}U^n, \mathcal{H} \widetilde{V}^n \rangle, \quad n \geq 1.
   \end{split}
   \end{equation}
 We then add \eqref{eq4.14} and \eqref{eq4.15} together to obtain that
\begin{equation}\label{eq4.16}
   \begin{split}
         \langle \mathcal{H}\delta_t^{(2)}{U}^n, \mathcal{H}\bar{\delta_t} {U}^n \rangle  +  P\left( \|V^n\|_F^2 \right) \|\mathcal{H}\bar{\delta_t} {U}^n\|^2 + \langle \mathcal{H}\bar{\delta_t}{V}^n, \mathcal{H} \widetilde{V}^n \rangle
         = \langle \mathcal{H}{f}^n, \mathcal{H}\bar{\delta_t} {U}^n \rangle,
   \end{split}
   \end{equation}
where
\begin{equation*}
   \begin{split}
        \langle \mathcal{H}\omega, \Phi v\rangle
         &= \langle \mathcal{A}\mathcal{B}\omega, \mathcal{B}\delta_{x}\delta_{\Bar{x}}v \rangle+\langle \mathcal{A}\mathcal{B}\omega, \mathcal{A}\delta_{y}\delta_{\Bar{y}}v \rangle=\langle \mathcal{A}(\mathcal{B}\omega), \delta_{x}\delta_{\Bar{x}}(\mathcal{B}v) \rangle+\langle \mathcal{B}(\mathcal{A}\omega), \delta_{y}\delta_{\Bar{y}}(\mathcal{A}v) \rangle\\
         &=\langle (I_x+\frac{h_1^2}{12}\delta_{x}\delta_{\Bar{x}})(\mathcal{B}\omega), \delta_{x}\delta_{\Bar{x}}(\mathcal{B}v) \rangle+\langle (I_y+\frac{h_2^2}{12}\delta_{y}\delta_{\Bar{y}})(\mathcal{A}\omega), \delta_{y}\delta_{\Bar{y}}(\mathcal{A}v) \rangle\\
         &=\langle \delta_{x}\delta_{\Bar{x}}(\mathcal{B}\omega), \mathcal{B}v \rangle+\frac{h_1^2}{12}\langle \delta_{x}\delta_{\Bar{x}}\mathcal{B}\omega, \delta_{x}\delta_{\Bar{x}}\mathcal{B}v\rangle+\langle \delta_{y}\delta_{\Bar{y}}(\mathcal{A}\omega), \mathcal{A}v \rangle+\frac{h_2^2}{12}\langle \delta_{y}\delta_{\Bar{y}}\mathcal{A}\omega, \delta_{y}\delta_{\Bar{y}}\mathcal{A}v\rangle\\
         &=\langle \delta_{x}\delta_{\Bar{x}}\mathcal{B}\omega, (I_x+\frac{h_1^2}{12}\delta_{x}\delta_{\Bar{x}})\mathcal{B}v \rangle+\langle \delta_{y}\delta_{\Bar{y}}\mathcal{A}\omega, (I_y+\frac{h_2^2}{12}\delta_{y}\delta_{\Bar{y}})\mathcal{A}v \rangle\\
         &=\langle \delta_{x}\delta_{\Bar{x}}\mathcal{B}\omega, \mathcal{A}\mathcal{B}v \rangle+\langle \delta_{y}\delta_{\Bar{y}}\mathcal{A}\omega, \mathcal{B}\mathcal{A}v \rangle=\langle (\delta_{x}\delta_{\Bar{x}}\mathcal{B}+\delta_{y}\delta_{\Bar{y}}\mathcal{A})\omega, \mathcal{A}\mathcal{B}v \rangle=\langle \Phi\omega, \mathcal{H} v\rangle
   \end{split}
   \end{equation*}
is applied. First, notice that
\begin{equation}\label{eq4.17}
   \begin{split}
        & \langle \mathcal{H}\delta_t^{(2)} {U}^n, \mathcal{H}\bar{\delta_t}{U}^n \rangle
        = \frac{1}{2\tau} \left[ \|\mathcal{H}\delta_t {U}^{n+1}\|^2 -  \|\mathcal{H}\delta_t {U}^{n}\|^2 \right], \\
        & \langle  \mathcal{H}\bar{\delta_t}{V}^n, \mathcal{H} \widetilde{V}^n \rangle
        = \frac{1}{4\tau}\left[ \| \mathcal{H}{V}^{n+1}\|^2 - \|\mathcal{H}{V}^{n-1}\|^2 \right].
   \end{split}
   \end{equation}
We then substitute \eqref{eq4.17} into \eqref{eq4.16} and apply ($\mathbf{A1}$) to arrive at
\begin{equation}\label{eq4.18}
   \begin{split}
        &\qquad\qquad\quad\frac{1}{2\tau} \left[ \|\mathcal{H}\delta_t {U}^{n+1}\|^2 -  \|\mathcal{H}\delta_t {U}^{n}\|^2 \right] + p_0\|\mathcal{H}\bar{\delta_t} {U}^n\|^2 \\
        &+\frac{1}{4\tau}\left[ \| \mathcal{H}{V}^{n+1}\|^2 +\| \mathcal{H}{V}^{n}\|^2 -\left(\| \mathcal{H}{V}^{n}\|^2+ \|\mathcal{H}{V}^{n-1}\|^2 \right)\right] \leq \|\mathcal{H}{f}^n\| \|\mathcal{H}\bar{\delta_t} {U}^n\|.
   \end{split}
   \end{equation}
We sum \eqref{eq4.18} with respect to $n$ from $1$ to $N$ and multiply the resulting equation by $2\tau $ to obtain 
\begin{equation}\label{eq4.19}
   \begin{split}
        &\qquad\qquad \qquad\quad \|\mathcal{H}\delta_t {U}^{N+1}\|^2 - \|\mathcal{H}\delta_t {U}^{1}\|^2 + 2p_0\tau\sum_{n=1}^N\|\mathcal{H}\bar{\delta_t} {U}^n\|^2 \\
         &+\frac{1}{2}\left[ \left(\|\mathcal{H} {V}^{N+1}\|^2 +\|\mathcal{H} {V}^{N}\|^2\right) -\left(\|\mathcal{H}{V}^{1}\|^2 + \|\mathcal{H}{V}^{0}\|^2 \right)\right]\leq 2\tau\sum_{n=1}^N\|\mathcal{H}{f}^n\| \|\mathcal{H}\bar{\delta_t} {U}^n\|.
   \end{split}
   \end{equation}
By Lemma \ref{lemma4.1} and the definition of $\|\cdot\|_a$ in the line below \eqref{UU1}, \eqref{eq4.19} can be expressed as
\begin{equation}\label{eq4.21}
   \begin{split}
    \|U^N\|_a^2+2p_0\tau\sum_{n=1}^N\|\mathcal{H}\bar{\delta_t} {U}^n\|^2\leq \|U^0\|_a^2+2\tau\sum_{n=1}^N\|{f}^n\| \|\mathcal{H}\bar{\delta_t} {U}^n\|.
   \end{split}
   \end{equation}
Then we have
\begin{equation}\label{eq4.22}
   \begin{split}
        &\|\mathcal{H}\bar{\delta_t} {U}^n\|=\left\|\frac{\mathcal{H}U^{n+1}-\mathcal{H}U^{n}+\mathcal{H}U^{n}-\mathcal{H}U^{n-1}}{2\tau}\right\|\leq \frac{\|\mathcal{H}\delta_t U^{n+1}\|+\|\mathcal{H}\delta_t U^{n}\|}{2}\leq \frac{\| U^{n}\|_a+\|U^{n-1}\|_a}{2}.
   \end{split}
   \end{equation}
We then substitute \eqref{eq4.22} into \eqref{eq4.21} to obtain that
\begin{equation}\label{eq4.23}
   \begin{split}
       \|U^N\|_a^2 \leq \|U^0\|_a^2+2\tau\sum_{n=1}^N\|{f}^n\| \frac{\| U^{n}\|_a+\|U^{n-1}\|_a}{2}.
   \end{split}
   \end{equation}
By taking the suitable $m$ such that $\|U^m\|_a=\max\limits_{0\leq n \leq N}\|U^n\|_a$, we get
\begin{equation*}
   \begin{split}
              \|U^N\|_a \leq \|U^m\|_a \leq  \|U^0\|_a+2\tau\sum_{n=1}^N\|{f}^n\|.
   \end{split}
   \end{equation*}
This completes finish the proof.
\end{proof}

We then prove the energy dissipative property of the fully discrete compact difference scheme \eqref{eq4.10}-\eqref{eq4.13} in the following theorem.

\begin{corollary}\label{coro2}
    The fully discrete compact difference scheme \eqref{eq4.10}-\eqref{eq4.13} satisfies the energy dissipative property as follows
    \begin{equation*}
         0\leq\check{\mathbf{E}}^n \leq \check{\mathbf{E}}^0, \quad 1\leq n \leq N,
    \end{equation*}
    where
    \begin{equation}\label{energy2d}
\begin{split}
     \check{\mathbf{E}}^n := \sqrt{\|\mathcal{H}\delta_t {U}^{n+1}\|^2+\frac{1}{2}\left(\|\mathcal{H}V^{n+1}\|^2+\|\mathcal{H}V^{n}\|^2\right)}.
\end{split}
\end{equation}
\end{corollary}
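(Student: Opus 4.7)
The plan is to mimic the 1D argument used for Corollary \ref{coro1}: simply specialize Theorem \ref{theorem4.1} to the source-free case $f \equiv 0$. Since $\check{\mathbf{E}}^n$ coincides with $\|U^n\|_a$ by comparing \eqref{energy2d} with the definition given just below \eqref{UU1}, the stability estimate \eqref{UU1} immediately yields $\check{\mathbf{E}}^N \le \check{\mathbf{E}}^0 + 2\tau \sum_{j=1}^N \|f^j\|$, and setting $f \equiv 0$ collapses the right-hand side to $\check{\mathbf{E}}^0$. Replacing $N$ by an arbitrary $n$ with $1 \le n \le N$ then gives the desired upper bound $\check{\mathbf{E}}^n \le \check{\mathbf{E}}^0$.

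The non-negativity $\check{\mathbf{E}}^n \ge 0$ is automatic from the definition \eqref{energy2d}: the quantity under the square root is a sum of squared discrete $L^2$ norms of $\mathcal{H}\delta_t U^{n+1}$, $\mathcal{H}V^{n+1}$, and $\mathcal{H}V^{n}$, each of which is non-negative. Combining both bounds gives $0 \le \check{\mathbf{E}}^n \le \check{\mathbf{E}}^0$ for all $1 \le n \le N$.

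Because the entire proof reduces to invoking Theorem \ref{theorem4.1} with a vanishing forcing term, there is no genuine obstacle: the work was already done in establishing the stability theorem, where the nonlinear nonlocal damping term contributes the non-negative quantity $P(\|V^n\|_F^2)\|\mathcal{H}\bar{\delta_t}U^n\|^2 \ge 0$ thanks to assumption $(\mathbf{A1})$, which is exactly what drives the monotone decay. The only thing I would be careful about is to verify that the telescoping identities \eqref{eq4.17} and the self-adjointness relation $\langle \mathcal{H}\omega, \Phi v\rangle = \langle \Phi\omega, \mathcal{H}v\rangle$ used inside Theorem \ref{theorem4.1} hold without any requirement on $f$, so that dropping $f$ in the argument is legitimate; inspecting the proof of Theorem \ref{theorem4.1} confirms this is the case. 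Following the author's pattern, I would therefore state the proof concisely by referring to Theorem \ref{theorem4.1} with $f \equiv 0$ and omit the routine repetition of the telescoping steps.
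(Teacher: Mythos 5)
Your proposal is correct and follows essentially the same route as the paper: the authors likewise set $f\equiv 0$ and invoke Theorem \ref{theorem4.1} (noting that $\check{\mathbf{E}}^n$ is exactly $\|U^n\|_a$), omitting the routine repetition of the telescoping argument.
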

\begin{proof}
  We set $f=0$. The proof could be similarly carried out by following Theorem \ref{theorem4.1}  and is thus omitted.
\end{proof}

\subsection{Convergence analysis}
For $0\leq i\leq 2J_1$, $0\leq j\leq 2J_2$ and $ 1\leq n\leq N$, we have
\begin{equation}\label{eq4.24}
   \begin{split}
        \mathcal{H}\delta_t^{(2)} u_{ij}^n &+ P\left(\int_{\Gamma} \left|  v(\mathbf{x},t_n)\right|^2 d\mathbf{x} \right) \mathcal{H}\bar{\delta_t} u_{ij}^n + \Phi\widetilde{v}_{ij}^n = \mathcal{H}{f}_{ij}^n + (\mathbb{R}_1)_{ij}^n + \sum\limits_{m=3}^{5}(\mathbb{R}_m)_{ij}^n,
   \end{split}
   \end{equation}
   \begin{equation}\label{eq4.25}
   \begin{split}
        \mathcal{H}\bar{\delta_t}v_{ij}^n = \Phi\bar{\delta_t}u_{ij}^n + (\mathbb{R}_2)_{ij}^n,
   \end{split}
   \end{equation}
\begin{equation}\label{eq4.26}
   \begin{split}
        & u_{0,j}^n = u_{2J_1,j}^n = u_{i,0}^n = u_{i,2J_2}^n = 0, \quad v_{0,j}^n = v_{2J_1,j}^n = v_{i,0}^n = v_{i,2J_2}^n = 0,
   \end{split}
   \end{equation}
\begin{equation}\label{eq4.27}
   \begin{split}
    u_{ij}^0 = u_0(x_i,y_j), \quad (\partial_tu)_{ij}^0 = u_1(x_i,y_j),
   \end{split}
\end{equation}
where the truncation errors
\begin{equation}\label{eq4.28}
   \begin{split}
        & (\mathbb{R}_1)_{ij}^n = \mathcal{H}{(\Delta v_{ij}^n)}-\Phi v_{ij}^n,\\
        & (\mathbb{R}_2)_{ij}^n = (\mathcal{R}_2)_{ij}^n+\mathcal{H}\left(\bar{\delta_t} v_{ij}^n - v_{ij}'(t_n)\right)+\Phi\left(\bar{\delta_t} u_{ij}^n - u_{ij}'(t_n)\right),  \\
        & (\mathbb{R}_3)_{ij}^n = \mathcal{H}\left(\delta_t^{(2)} u_{ij}^n - u_{ij}''(t_n)\right), \\
        & (\mathbb{R}_4)_{ij}^n = P\left( \int_{\Gamma} \left|  v(\mathbf{x},t_n)\right|^2 d\mathbf{x} \right) \mathcal{H}\left[\bar{\delta_t} u_{ij}^n - u_{ij}'(t_n) \right], \\
        &  (\mathbb{R}_5)_{ij}^n = \Phi\left( \widetilde{v}_{ij}^n-v(x_i, y_j, t_n)\right),
   \end{split}
   \end{equation}
with $|(\mathcal{R}_2)_{ij}^n|:=|\mathcal{H}v_{ij}^n-\Phi u_{ij}^n|$.
 Then, define $\xi_{ij}^n := u_{ij}^n - U_{ij}^n$ and $ \eta_{ij}^n:=v_{ij}^n-V_{ij}^n$ for $0\leq n \leq N$, $0\leq i\leq 2J_1$ and $0\leq j\leq 2J_2$. By subtracting \eqref{eq4.10}-\eqref{eq4.13} from \eqref{eq4.24}-\eqref{eq4.27}, we obtain the following error equations
\begin{equation}\label{eq4.29}
   \begin{split}
        \mathcal{H}\delta_t^{(2)}\xi_{ij}^n &+ P\left( \|{V}^n\|_F^2 \right) \mathcal{H}\bar{\delta_t} \xi_{ij}^n  + \Phi\widetilde{\eta}_{ij}^n
         =  (\mathbb{R}_1)_{ij}^n \\
         &+\left[  P\left( \|V^n\|_F^2 \right) -  P\left(
        \int_{\Gamma} \left|  v(\mathbf{x},t_n)\right|^2 d\mathbf{x} \right)  \right] \mathcal{H}\bar{\delta_t}u_{ij}^n + \sum\limits_{m=3}^{5}(\mathbb{R}_m)_{ij}^n,
   \end{split}
   \end{equation}
   \begin{equation}\label{eq4.30}
   \begin{split}
        \mathcal{H}\bar{\delta_t} \eta_{ij}^n
         = \Phi\bar{\delta_t}\xi_j^n+(\mathbb{R}_2)_{ij}^n,
   \end{split}
   \end{equation}
\begin{equation}\label{eq4.31}
   \begin{split}
        & \xi_{0,j}^n = \xi_{2J_1,j}^n = \xi_{i,0}^n = \xi_{i,2J_2}^n = 0, \quad \eta_{0,j}^n = \eta_{2J_1,j}^n = \eta_{i,0}^n = \eta_{i,2J_2}^n = 0,
   \end{split}
   \end{equation}
\begin{equation}\label{eq4.32}
   \begin{split}
        \xi_{ij}^0 = 0, \quad \delta_t \xi_{ij}^1 = \delta_tu_{ij}^1 - \delta_t U^1_{ij} = \frac{1}{2\tau} \int_{0}^{t_1} \partial^3_tu(x_i,y_j,s)(s-t_1)^2ds.
   \end{split}
   \end{equation}
We then prove the following  convergence result.

\begin{theorem} \label{theorem4.2}
Let  $\{u_{ij}^n|(x_i,y_j)\in \Gamma_h,\;1\leq n\leq N\}$ and $\{v_{ij}^n|(x_i,y_j)\in \Gamma_h,\;1\leq n\leq N\}$ be the solution of \eqref{eq4.24}-\eqref{eq4.27}. Suppose that the assumption  ($\mathbf{A4}$) holds,
then it follows that
\begin{equation}\label{qqq11}
 \begin{split}
    \sqrt{\|\delta_t({u}^{n+1}-{U}^{n+1})\|^2
       + \|{v}^{n+1} -{V}^{n+1} \|^2 + \|{v}^{n} -{V}^{n} \|^2}  \leq \mathcal{Q}(T) (\tau^2 + h^4),\quad 1\leq n \leq N.
 \end{split}
\end{equation}
\end{theorem}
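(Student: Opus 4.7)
The plan is to mirror the 1D argument of Theorem \ref{theorem3.2}, replacing $\mathscr{A}$ by the 2D compact operator $\mathcal{H}=\mathcal{A}\mathcal{B}$, the biharmonic discretization $\delta_x\delta_{\bar x}$ by $\Phi=\mathcal{B}\delta_x\delta_{\bar x}+\mathcal{A}\delta_y\delta_{\bar y}$, and the 1D Simpson-based norm $\|\cdot\|_B$ by the 2D six-point Simpson-based norm $\|\cdot\|_F$. Concretely, I would take discrete inner products of \eqref{eq4.29} with $\mathcal{H}\bar{\delta_t}\xi^n$ and of \eqref{eq4.30} with $\mathcal{H}\widetilde{\eta}^n$, add them, and invoke the self-adjointness identity $\langle\mathcal{H}\omega,\Phi v\rangle=\langle\Phi\omega,\mathcal{H}v\rangle$ already derived in the proof of Theorem \ref{theorem4.1} to cancel the cross-coupling between $\xi$ and $\eta$. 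The standard telescoping identities (step \eqref{eq4.17}) then convert the two leading terms into differences of $\|\mathcal{H}\delta_t\xi^{n+1}\|^2$ and $\|\mathcal{H}\eta^{n+1}\|^2+\|\mathcal{H}\eta^n\|^2$, while assumption (A1) turns the damping contribution into $p_0\|\mathcal{H}\bar{\delta_t}\xi^n\|^2\geq 0$.

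Next, I would derive the 2D analogue of \eqref{eq2.30}: applying the 2D six-point composite Simpson formula \eqref{eq4.99} to $|v(\mathbf{x},t_n)|^2$, expanding and regrouping grid contributions by parity exactly as in \eqref{eq2.26}, one identifies the quadrature sum with $\|v(t_n)\|_F^2$; combining with the standard Simpson error $\mathcal{Q}h^4$ (valid under the regularity in (A4)), assumption (A2), the triangle inequality, and Lemma \ref{lemma4.2} yields
\begin{equation*}
\Bigl|P\bigl(\|V^n\|_F^2\bigr)-P\Bigl(\int_\Gamma|v(\mathbf{x},t_n)|^2d\mathbf{x}\Bigr)\Bigr|\leq \mathcal{Q}(h^4+\|\eta^n\|).
\end{equation*}
Since $\|\bar{\delta_t}u^n\|\leq\mathcal{Q}$ by Taylor expansion on the smooth $u$, this controls the nonlinear-nonlocal contribution in \eqref{eq4.29}. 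I would then introduce the discrete energy $\|\xi^n\|_d:=\sqrt{\|\mathcal{H}\delta_t\xi^{n+1}\|^2+\tfrac{1}{2}(\|\mathcal{H}\eta^{n+1}\|^2+\|\mathcal{H}\eta^n\|^2)}$, sum the resulting inequality over $n=1,\dots,N$, multiply by $2\tau$, and use $\|\mathcal{H}\bar{\delta_t}\xi^n\|\leq\tfrac12(\|\xi^n\|_d+\|\xi^{n-1}\|_d)$ and $\|\mathcal{H}\widetilde{\eta}^n\|\leq\tfrac{\sqrt2}{2}(\|\xi^n\|_d+\|\xi^{n-1}\|_d)$ (verbatim as in \eqref{eq3.31}--\eqref{eq3.32}) to arrive at
\begin{equation*}
\|\xi^N\|_d\leq\|\xi^0\|_d+\mathcal{Q}\tau\sum_{n=1}^N\sum_{m=1}^{5}\|(\mathbb{R}_m)^n\|+\mathcal{Q}\tau\sum_{n=1}^N(h^4+\|\xi^n\|_d).
\end{equation*}

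To close the argument, I would bound the initial defect $\|\xi^0\|_d\leq\mathcal{Q}\tau^2$ from \eqref{eq4.32} and the analogous identity for $\eta^1$, then estimate each truncation error in \eqref{eq4.28}: $\|(\mathbb{R}_1)^n\|\leq\mathcal{Q}h^4$ via the dimension-by-dimension splitting $\mathcal{H}\Delta v-\Phi v=\mathcal{B}(\mathcal{A}\partial_x^2v-\delta_x\delta_{\bar x}v)+\mathcal{A}(\mathcal{B}\partial_y^2v-\delta_y\delta_{\bar y}v)$ together with Lemma \ref{lemma2.3} and the $\partial_k^6u$ bound in (A4); $\|(\mathbb{R}_2)^n\|\leq\mathcal{Q}(h^4+\tau^2)$ by combining the same spatial bound with the Taylor estimate of $\bar{\delta_t}-\partial_t$; and $\|(\mathbb{R}_3)^n\|,\|(\mathbb{R}_4)^n\|,\|(\mathbb{R}_5)^n\|\leq\mathcal{Q}\tau^2$ by the same integral-remainder Taylor expansions used in \eqref{eq3.37}--\eqref{eq3.40}. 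The discrete Gr\"onwall lemma then gives $\|\xi^N\|_d\leq\mathcal{Q}(T)(\tau^2+h^4)$, and unfolding the definition of $\|\cdot\|_d$ together with Lemma \ref{lemma4.1} yields \eqref{qqq11}. I expect the main obstacle to lie in Step 3, namely in the combinatorial bookkeeping that identifies the 2D six-point Simpson quadrature of $|v|^2$ with the norm $\|v\|_F^2$ exactly; the parity-based splitting of interior contributions into $(e,e)$, $(e,o)$, $(o,e)$ and $(o,o)$ classes in 2D is considerably more intricate than in 1D, and it is precisely this identity which justifies the particular weights defining the novel norm \eqref{g1eq4.7} and makes the subsequent nonlinear-damping estimate go through.
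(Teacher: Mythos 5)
Your proposal is correct and follows essentially the same route as the paper's own proof: the same paired inner products with $\mathcal{H}\bar{\delta_t}\xi^n$ and $\mathcal{H}\widetilde{\eta}^n$, the same self-adjointness identity for $\Phi$ and $\mathcal{H}$, the same energy norm (the paper calls it $\|\cdot\|_b$), the same max-index and truncation-error estimates, and the same Gr\"onwall closure. The step you flag as the main obstacle, identifying the 2D six-point Simpson quadrature of $|v|^2$ with $\|v\|_F^2$, is in fact glossed over in the paper (it only says ``similar to the analysis of \eqref{eq2.27}--\eqref{eq2.30}''), so your treatment is, if anything, more explicit there.
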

\begin{proof} Firstly, we take the inner product of \eqref{eq4.29} and \eqref{eq4.30} with $\mathcal{H}\bar{\delta_t}{\xi}^n$ and $\mathcal{H}\widetilde{\eta}^n$, respectively, and add the two resulting equations and apply the assumption ($\mathbf{A1}$) to obtain
\begin{equation}\label{eq4.34}
   \begin{split}
       & \langle\mathcal{H}\delta_t^{(2)}\xi^n,  \mathcal{H}\bar{\delta_t}{\xi}^n\rangle + p_0 \|\mathcal{H}\bar{\delta_t}{\xi}^n\|^2  + \langle\mathcal{H}\bar{\delta_t}\eta^n,  \mathcal{H}\widetilde{\eta}^n\rangle
         \leq \langle({\mathbb{R}}_1)^n, \mathcal{H}\bar{\delta_t}{\xi}^n\rangle + \langle({\mathbb{R}}_2)^n, \mathcal{H}\widetilde{\eta}^n\rangle\\
       & + \sum\limits_{m=3}^{5}\langle({\mathbb{R}}_m)^n, \mathcal{H}\bar{\delta_t}{\xi}^n\rangle  + \left[  P\left( \|{V}^n\|_F^2 \right) -  P\left(
        \int_{\Gamma} \left|  v(\mathbf{x},t_n)\right|^2 d\mathbf{x} \right)  \right] \langle\mathcal{H}\bar{\delta_t}{u}^n, \mathcal{H}\bar{\delta_t}{\xi}^n\rangle.
   \end{split}
   \end{equation}
After calculation, we have
\begin{equation}\label{eq4.35}
   \begin{split}
        & \langle \mathcal{H}\delta_t^{(2)} {\xi}^n, \mathcal{H}\bar{\delta_t}{\xi}^n \rangle
        = \frac{1}{2\tau} \left[ \|\mathcal{H}\delta_t {\xi}^{n+1}\|^2 -  \|\mathcal{H}\delta_t {\xi}^{n}\|^2 \right], \\
        & \langle \mathcal{H}\bar{\delta_t}{\eta}^n, \mathcal{H} \widetilde{\eta}^n \rangle
        = \frac{1}{4\tau}\left[ \| \mathcal{H}{\eta}^{n+1}\|^2 - \|\mathcal{H}{\eta}^{n-1}\|^2 \right].
   \end{split}
   \end{equation}
Similar to the analysis of \eqref{eq2.27}-\eqref{eq2.30}, we utilize the Cauchy-Schwarz inequality, Lemma \ref{lemma4.2} and \eqref{eq4.35} to deduce that
\begin{equation}\label{eq4.36}
   \begin{split}
       & \frac{1}{2\tau}\left[\|\mathcal{H}\delta_t\xi^{n+1}\|^2-\|\mathcal{H}\delta_t\xi^{n}\|^2\right] + p_0 \|\mathcal{H}\bar{\delta_t}{\xi}^n\|^2 \\
       & + \frac{1}{4\tau}\left[\|\mathcal{H}\eta^{n+1}\|^2+\|\mathcal{H}\eta^{n}\|^2-\left(\|\mathcal{H}\eta^{n}\|^2+\|\mathcal{H}\eta^{n-1}\|^2\right) \right]\\
       & \leq \sum\limits_{m=3}^{5}\|({\mathbb{R}}_m)^n\|\|\mathcal{H}\bar{\delta_t}{\xi}^n\| + \|({\mathbb{R}}_1)^n\|\|\mathcal{H}\bar{\delta_t}{\xi}^n\|\\
       & + \|({\mathbb{R}}_2)^n\| \|\mathcal{H}\widetilde{\eta}^n\|+ \mathcal{Q}\left(h_1^4+h_2^4+\|\eta^n\|\right)\|\mathcal{H}\bar{\delta_t}\xi^n\|,
   \end{split}
   \end{equation}
where
\begin{equation*}
    \begin{split}
    |\bar{\delta_t}u_{ij}^n|=&\Bigg|\partial_tu(x_i,y_j,t_n)+\frac{1}{2\tau}\left(\int_{t_n}^{t_{n+1}}(t_n-s)\partial_s^2u(x_i,y_j,s)ds-\int_{t_{n-1}}^{t_{n}}(s-t_{n-1})\partial_s^2u(x_i,y_j,s)ds\right)\Bigg|
          \leq \mathcal{Q},
    \end{split}
\end{equation*}
and thus implies that $\|\bar{\delta_t}u^n\|\leq \mathcal{Q}$. Subsequently, we define
\begin{equation}\label{eq4.37}
    \begin{split}
\|\xi^n\|_b:=\sqrt{\|\mathcal{H}\delta_t\xi^{n+1}\|^2+\frac{1}{2}\left(\|\mathcal{H}\eta^{n+1}\|^2+\|\mathcal{H}\eta^{n}\|^2\right)}.
    \end{split}
\end{equation}
We sum \eqref{eq4.36} for $n$ from $1$ to $N$ and then multiply the resulting inequality by $2\tau$ to find that
\begin{equation}\label{eq4.38}
   \begin{split}
       & \|\xi^N\|_b^2 -  \|\xi^0\|_b^2 +2\tau p_0 \sum_{n=1}^N\|\mathcal{H}\bar{\delta_t}{\xi}^n\|^2 \leq 2\tau\sum_{n=1}^N\sum\limits_{m=3}^{5}\|({\mathbb{R}}_m)^n\|\|\mathcal{H}\bar{\delta_t}{\xi}^n\| \\
       & + 2\tau\sum_{n=1}^N\|({\mathbb{R}}_1)^n\|\|\mathcal{H}\bar{\delta_t}{\xi}^n\| + 2\tau\sum_{n=1}^N\|({\mathbb{R}}_2)^n\| \|\mathcal{H}\widetilde{\eta}^n\|+ \mathcal{Q}\tau\sum_{n=1}^N\left(h_1^4+h^4_2+\|\eta^n\|\right)\|\mathcal{H}\bar{\delta_t}\xi^n\|.
   \end{split}
   \end{equation}
We employ the relations
\begin{equation}\label{eq4.39}
   \begin{split}
        &\|\mathcal{H}\bar{\delta_t} {\xi}^n\|
        \leq \frac{\| \xi^{n}\|_b+\|\xi^{n-1}\|_b}{2},
   \end{split}
   \end{equation}
and
\begin{equation}\label{eq4.40}
   \begin{split}
        &\|\mathcal{H}\widetilde{\eta}^n\|
        \leq \frac{\sqrt{2}}{2}\left(\| \xi^{n}\|_b+\|\xi^{n-1}\|_b\right),
   \end{split}
   \end{equation}
and choose the suitable $m$ such that $\|\xi^m\|_b=\max\limits_{0\leq n\leq N}\|\xi^n\|_b$ to conclude that
\begin{equation}\label{eq4.41}
   \begin{split}
        \|\xi^m\|_b^2 \leq \|\xi^0\|_b\|\xi^m\|_b + \mathcal{Q}\tau\sum_{n=1}^N\sum\limits_{m=1}^{5}\|({\mathbb{R}}_m)^n\|\|\xi^m\|_b +  \mathcal{Q}\tau\sum_{n=1}^N\left(h_1^4+h^4_2+\|\xi^n\|_b\right)\|\xi^m\|_b.
   \end{split}
   \end{equation}
Hence,
   \begin{equation}\label{g1eq4.41}
   \begin{split}
      \|\xi^N\|_b \leq \|\xi^m\|_b \leq \|\xi^0\|_b + \mathcal{Q}\tau\sum_{n=1}^N\sum\limits_{m=1}^{5}\|({\mathbb{R}}_m)^n\| +  \mathcal{Q}\tau\sum_{n=1}^N\left(h_1^4+h^4_2+\|\xi^n\|_b\right).
   \end{split}
   \end{equation}
After that, we will examine $\|\xi^0\|_b$. First, we utilize $\xi^0=0$ to arrive at
\begin{equation}\label{eq4.42}
   \begin{split}
        \xi^1_{ij}=u^1_{ij}-U^1_{ij}=\frac{1}{2}\int_{0}^{t_1}(s-t_1)^2\partial_s^3u(x_i,y_j,s)ds \Rightarrow \|\mathcal{H}\delta_t\xi^1\|\leq \mathcal{Q}\tau^2.
   \end{split}
   \end{equation}
Note that $\eta^0=0 $, we similarly can get
\begin{equation}\label{eq4.43}
   \begin{split}
         \eta^1_{ij}=v^1_{ij}-V^1_{ij}=\frac{1}{2}\int_{0}^{t_1}(s-t_1)^2\partial_s^3v(x_i,y_j,s)ds \Rightarrow \|\mathcal{H}\eta^1\|\leq \mathcal{Q}\tau^2.
   \end{split}
   \end{equation}
As a result,
\begin{equation}\label{eq4.44}
   \begin{split}
        \|\xi^0\|_b
              \leq \mathcal{Q}\tau^2.
   \end{split}
   \end{equation}
We leverage the Taylor expansion with integral remainder to obtain
\begin{equation*}
   \begin{split}
          &u''_{ij}(t_n)-\delta_t^{(2)}u^n_{ij}=\frac{-1}{6\tau^2}\left[\int_{t_n}^{t_{n+1}}(t_{n+1}-s)^3\partial_s^4u(x_i,y_j,s)ds+\int_{t_{n-1}}^{t_{n}}(s-t_{n-1})^3\partial_s^4u(x_i,y_j,s)ds\right],\\
          &n\geq 2, \quad u''_{ij}(t_1)-\delta_t^{(2)}u^1_{ij}=\frac{-1}{2\tau^2}\left[\int_{t_1}^{t_{2}}(t_{2}-s)^2\partial_s^3u(x_i,y_j,s)ds+\int_{0}^{t_{1}}s^2\partial_s^3u(x_i,y_j,s)ds\right],
   \end{split}
   \end{equation*}
consequently, we conclude that
\begin{equation}\label{eq4.46}
   \begin{split}
          \tau\sum_{n=1}^N\|({\mathbb{R}}_3)^n\|\leq\tau\int_0^{2\tau}\|\partial_s^3u(s)\|ds+\tau^2\int_\tau^{t_{N+1}}\|\partial_s^4u(s)\|ds \leq \mathcal{Q}\tau^2.
   \end{split}
   \end{equation}
As for $({\mathbb{R}}_4)_j^n $, we then have
\begin{equation*}
   \begin{split}
    u'_{ij}(t_n)-\bar{\delta_t}u^n_{ij}=\frac{-1}{4\tau}\left[\int_{t_n}^{t_{n+1}}(t_{n+1}-s)^2\partial_s^3u(x_i,y_j,s)ds+\int_{t_{n-1}}^{t_{n}}(s-t_{n-1})^2\partial_s^3u(x_i,y_j,s)ds\right],
   \end{split}
   \end{equation*}
as a result, we further comprehend that
\begin{equation}\label{eq4.47}
   \begin{split}
    \tau\sum_{n=1}^N\|({\mathbb{R}}_4)^n\|\leq \frac{p_0}{2}\tau^2\int_0^{t_{N+1}}\|\partial_s^3u(s)\|ds \leq \mathcal{Q}\tau^2.
   \end{split}
   \end{equation}
From \eqref{eq4.28} and Lemma \ref{lemma2.3}, we can find that
\begin{equation}\label{eq4.45}
   \begin{split}
          \tau\sum_{n=1}^N\sum\limits_{m=1}^{2}\|({\mathbb{R}}_m)^n\|\leq \mathcal{Q}\left(h_1^4+h_2^4\right).
   \end{split}
   \end{equation}
We finally analyze the term $({\mathbb{R}}_5)_j^n$ to find that
\begin{equation*}
   \begin{split}
          v_{ij}^n-\widetilde{v}_{ij}^n=\frac{-1}{2}\left[\int_{t_n}^{t_{n+1}}(t_{n+1}-s)\partial_s^2v(x_i,y_j,s)ds+\int_{t_{n-1}}^{t_{n}}(s-t_{n-1})\partial_s^2v(x_i,y_j,s)ds\right],
   \end{split}
   \end{equation*}
accordingly,
\begin{equation}\label{eq4.48}
   \begin{split}
          \tau\sum_{n=1}^N\|({\mathbb{R}}_5)^n\|\leq \tau^2\int_0^{t_{N+1}}\|\partial_s^2v(s)\|ds \leq \mathcal{Q}\tau^2.
   \end{split}
   \end{equation}
Substituting \eqref{eq4.44}-\eqref{eq4.48} into \eqref{eq4.41} and utilizing the discrete Gr\"onwall's lemma, we can yield \eqref{qqq11}. This completes the proof.
\end{proof}

\section{Numerical experiment}\label{sec5}
 We present numerical examples to investigate the convergence behavior of fully discrete compact difference schemes \eqref{eq3.1}-\eqref{eq3.4} and \eqref{eq4.10}-\eqref{eq4.13}.

\begin{table}
    \center \small
    \caption{Discrete $L^2$-norm errors and convergence rates for Example 1.} \label{tab1}
    \begin{tabular}{cccccccccccc}
      \toprule
      & \multicolumn{2}{c}{$J=2^6$, $P(\omega)=\sqrt{1+\omega}$} & &\multicolumn{2}{c}{$N=2^{15}$, $P(\omega)=\sqrt{1+\omega}$}\\
       \cmidrule(lr){2-3}\cmidrule(lr){5-6}
       $N$  & $F_U(\tau,h)$ & $Order_U^t$ & $2J$  & $G_U(\tau,h)$ & $Order_U^h$\\
      \midrule
        $2^7$  & $2.3724 \times 10^{-3}$   &    *    &  $2^3$   & $4.3416 \times 10^{-4}$  &  *  \\
        $2^8$  & $6.3628 \times 10^{-4}$   &  1.90   &  $2^4$  & $2.7938 \times 10^{-5}$  &  3.96 \\
        $2^9$  & $1.6217 \times 10^{-4}$   &  1.97   &  $2^5$  & $1.7522 \times 10^{-6}$  &  4.00 \\
        $2^{10}$ & $4.0782 \times 10^{-5}$   &  1.99   &  $2^6$  & $1.1173 \times 10^{-7}$  &  3.97 \\
        \text{Theory} &     &  2.00  &          &      &    4.00  &   \\
        \midrule
        & \multicolumn{2}{c}{$J=2^6$, $P(\omega)=1+\omega$} & &\multicolumn{2}{c}{$N=2^{15}$, $P(\omega)=1+\omega$}\\
       \cmidrule(lr){2-3}\cmidrule(lr){5-6}
       $N$  & $F_U(\tau,h)$ & $Order_U^t$ & $2J$  & $G_U(\tau,h)$ & $Order_U^h$\\
      \midrule
        $2^7$  & $7.3401 \times 10^{-4}$   &    *    &  $2^3$  & $1.2399 \times 10^{-3}$  &  *  \\
        $2^8$  & $2.0381 \times 10^{-4}$   &  1.85   &  $2^4$  & $7.6212 \times 10^{-5}$  &  4.02 \\
        $2^9$  & $5.3179 \times 10^{-5}$   &  1.94   &  $2^5$  & $4.7695 \times 10^{-6}$  &  4.00 \\
        $2^{10}$ & $1.3566 \times 10^{-5}$ &  1.97   &  $2^6$  & $3.0480 \times 10^{-7}$  &  3.97 \\
        \text{Theory} &     &  2.00  &          &      &    4.00  &   \\
      \bottomrule
    \end{tabular}
\end{table}

\textbf{Example 1. (1D case)}   Let $\Gamma = (0, 1)$, $T=1$, $f(x,t)=t^3\sin(\pi x)$, $u_0(x)=\sin(\pi x)$ and $u_1(x)=0$. We fix the spatial partition $J$ to investigate the temporal convergence behavior of the scheme and fix the temporal partition $N$ to test its spatial convergence rates.
Since the exact solution is unknown, we define the temporal and spatial $L^2$-norm errors as follows
\begin{equation}\label{FGU}
\begin{split}
     F_U(\tau,h):=\sqrt{h\sum_{j=1}^{2J-1}\left|U_j^{N+1}-U_j^{2N+1}\right|^2},\quad
     G_U(\tau,h):=\sqrt{h\sum_{j=1}^{2J-1}\left|U_j^{N+1}-U_{2j}^{N+1}\right|^2},
\end{split}
\end{equation}
and accordingly define the spatial and temporal convergence rates
\begin{equation}\label{oFG}
\begin{split}
     Order_U^t=\log_{2}\left(\frac{F_U(2\tau,h)}{F_U(\tau,h)}\right),\quad
     Order_U^{h}=\log_{2}\left(\frac{G_{U}(\tau,2h)}{G_{U}(\tau,h)}\right),
\end{split}
\end{equation}
We present the numerical results in Table \ref{tab1}, which illustrates the second-order accuracy in time and the fourth-order accuracy in space of the scheme \eqref{eq3.1}-\eqref{eq3.4} as proved in Theorem \ref{theorem3.2}.

 In the left plot of Fig.~\ref{Figerror1}, we  present  the plot of the energy \eqref{energy1d} by choosing the same data as those for Table \ref{tab1} except that  $f=0$, $J=2^4$, $N=2^{15}$ and $P(\omega)=\sqrt{1+\omega}$, which validates the energy dissipation property of the proposed scheme as proved in Corollary \ref{coro1}.

 \textbf{Example 2. (2D case)} Let $\Gamma = (0, 1)^2$, $T=1$, $f(x,y,t)=t^3\sin(\pi x)\sin(\pi y)$, $u_0(x,y)=\sin(\pi x)\sin(\pi y)$, $u_1(x,y)=0$, and $J_1=J_2=J$.
Since the exact solution is unknown, we define the following $L^2$-norm errors
\begin{equation*}
\begin{split}
     F_U^2(\tau,h):=h\sqrt{\sum_{i=1}^{2J-1}\sum_{j=1}^{2J-1}\left|U_{ij}^{N+1}-U_{ij}^{2N+1}\right|^2}, \quad
     G_U^2(\tau,h):=h\sqrt{\sum_{i=1}^{2J-1}\sum_{j=1}^{2J-1}\left|U_{ij}^{N+1}-U_{2i,2j}^{N+1}\right|^2}.
\end{split}
\end{equation*}
The spatial and temporal convergence rates can thus be calculated by
\begin{equation*}
\begin{split}
     Order_U^{2,t}=\log_{2}\left(\frac{F^2_U(2\tau,h)}{F^2_U(\tau,h)}\right),\quad
     Order_U^{2,h}=\log_{2}\left(\frac{G^2_{U}(\tau,2h)}{G^2_{U}(\tau,h)}\right).
\end{split}
\end{equation*}

\begin{figure}
\centering
\subfigure{
\begin{minipage}[t]{0.51\linewidth}
\centering
\includegraphics[width=\linewidth]{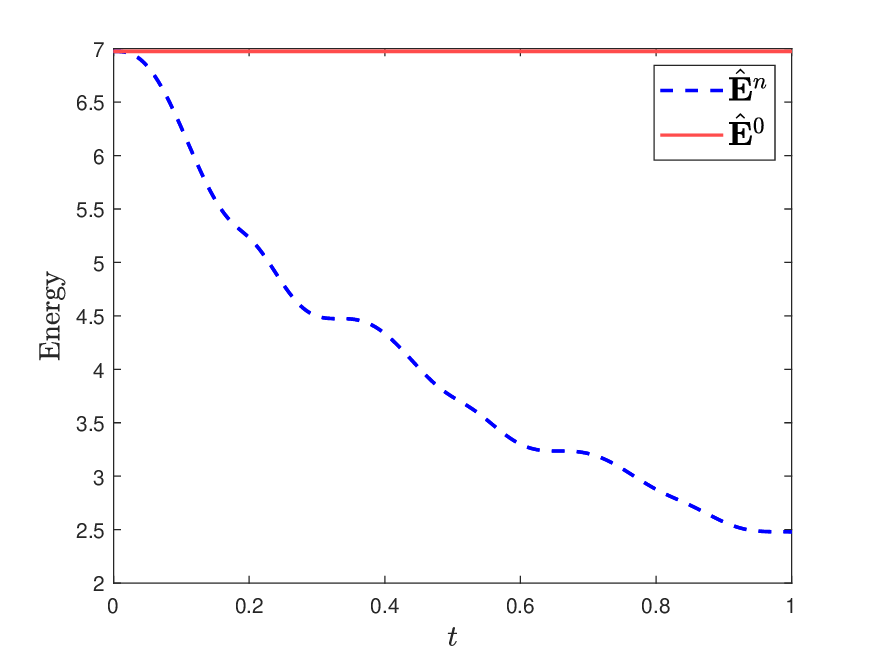}
\end{minipage}%
}%
\subfigure{
\begin{minipage}[t]{0.51\linewidth}
\centering
\includegraphics[width=\linewidth]{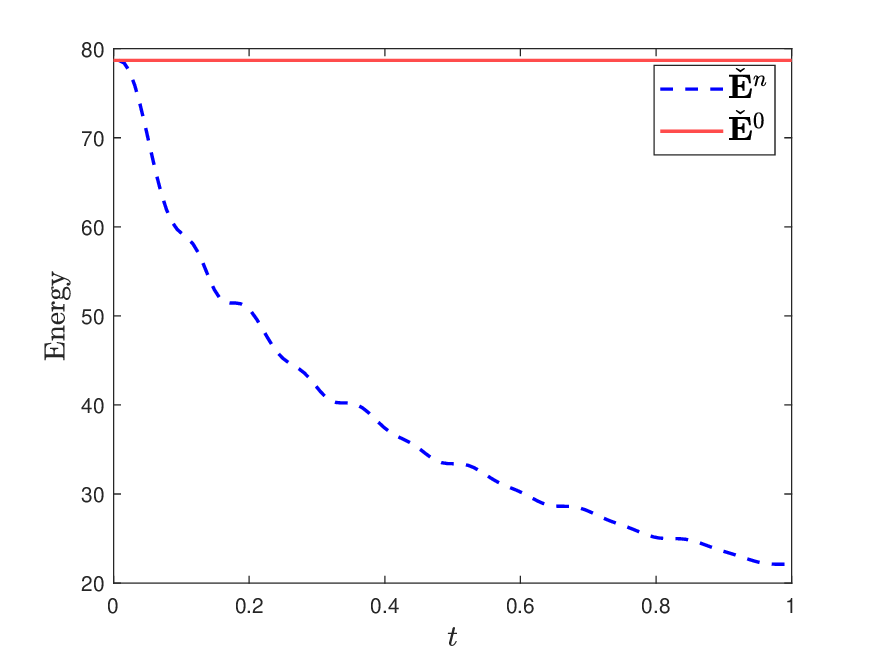}
\end{minipage}%
}%
\centering
\caption{The plot of the energy $\hat{\mathbf{E}}^n$ in Example 1 (left) and in Example 2 (right).}
\label{Figerror1}
\end{figure}

\begin{table}
    \center \small
    \caption{Discrete $L^2$-norm errors and convergence rates for Example 2.} \label{tab4}
    \begin{tabular}{cccccccccccc}
      \toprule
      & \multicolumn{2}{c}{$J=2^4$, $P(\omega)=1+\omega$} & &\multicolumn{2}{c}{$N=10000$, $P(\omega)=1+\omega$}\\
       \cmidrule(lr){2-3}\cmidrule(lr){5-6}
       $N$  & $F_U^2(\tau,h)$ & $Order_U^{2,t}$ & $2J$  & $G_U^2(\tau,h)$ & $Order_U^{2,h}$\\
      \midrule
        $2^8$     & $4.2355 \times 10^{-3}$   &    *    &  $2^3$  & $9.1955 \times 10^{-4}$  &  *  \\
        $2^9$     & $1.1489 \times 10^{-3}$   &  1.88   &  $2^4$  & $5.8554 \times 10^{-5}$  &  3.97 \\
        $2^{10}$  & $2.9374 \times 10^{-4}$   &  1.97   &  $2^5$  & $3.7624 \times 10^{-6}$  &  3.96  \\
        $2^{11}$  & $7.4107 \times 10^{-5}$   &  1.99   &  $2^6$  & $2.6328 \times 10^{-7}$  &  3.84 \\
        \text{Theory} &     &  2.00  &          &      &    4.00  &   \\
        \midrule
        & \multicolumn{2}{c}{$J=2^4$, $P(\omega)=\sqrt{1+\omega}$} & &\multicolumn{2}{c}{$N=10000$, $P(\omega)=\sqrt{1+\omega}$}\\
       \cmidrule(lr){2-3}\cmidrule(lr){5-6}
       $N$  & $F_U^2(\tau,h)$ & $Order_U^{2,t}$ & $2J$  & $G_U^2(\tau,h)$ & $Order_U^{2,h}$\\
      \midrule
        $2^8$     & $1.0497 \times 10^{-2}$   &    *    &  $2^3$  & $2.1823 \times 10^{-3}$  &  * \\
        $2^9$     & $2.7594 \times 10^{-3}$   &  1.93   &  $2^4$  & $1.3711 \times 10^{-4}$  &  3.99\\
        $2^{10}$  & $7.0018 \times 10^{-4}$   &  1.98   &  $2^5$  & $8.8033 \times 10^{-6}$  &  3.96 \\
        $2^{11}$  & $1.7631 \times 10^{-4}$   &  1.99   &  $2^6$  & $6.1602 \times 10^{-7}$  &  3.84 \\
        \text{Theory} &     &  2.00  &          &      &    4.00  &   \\
      \bottomrule
    \end{tabular}
\end{table}


 In Table \ref{tab4}, we test the $ L^2$-norm errors and convergence rates of the fully discrete scheme \eqref{eq4.10}-\eqref{eq4.13} for the 2D case. We observe that the numerical results are consistent with our theoretical findings (see Theorem \ref{theorem4.2}).
 In the right plot of Fig.~\ref{Figerror1}, we  present  the plot of the energy \eqref{energy2d} by choosing the same data as those for Table \ref{tab4} except that  $f=0$, $J=2^5$, $N=2^{7}$ and $P(\omega)= {1+\omega}$, which validates the energy dissipation property of the proposed scheme as proved in Corollary \ref{coro2}.

\section*{Acknowledgement}
This work was partially supported by the National Natural Science Foundation of China (No. 12301555), the Taishan Scholars Program of Shandong Province (No. tsqn202306083), and the National Key R\&D Program of China (No. 2023YFA1008903), the Postdoctoral Fellowship Program of CPSF (No. GZC20240938), and the China Postdoctoral Science Foundation (No. 2024M762459) and the Natural Science Foundation of Hubei Province (No. JCZRQN202500278).


\end{document}